\documentclass[a4paper,11pt]{scrartcl}
\addtokomafont{disposition}{\rmfamily}



\usepackage{amsthm}
\usepackage{amsmath}
\usepackage{amssymb}
\usepackage{mathrsfs} 
\usepackage{graphicx}
\usepackage[round]{natbib}

\setlength{\bibsep}{1pt plus 0.3ex}

\usepackage[dvipsnames]{xcolor}
\usepackage{mathtools}

\usepackage[colorlinks=true,linkcolor=NavyBlue, citecolor=NavyBlue, urlcolor=NavyBlue, linktocpage=true]{hyperref} 
\usepackage{bbm}
\usepackage{textcomp}
\usepackage{enumerate}
\usepackage{mathtools}

\usepackage{color}
\usepackage{framed}
\usepackage{comment}
\definecolor{shadecolor}{gray}{0.9}

\usepackage{dsfont} 
\usepackage{caption} 
\usepackage{subcaption} 
\usepackage{graphicx}
\usepackage{pdfpages}
\usepackage{url}
\usepackage[english]{babel}
%



\specialcomment{extra}{\begin{shaded}}{\end{shaded}}

\theoremstyle{plain}  
\newtheorem{thm}{Theorem}[section] 
 
\newtheorem{prop}[thm]{Proposition} 
\newtheorem{cor}[thm]{Corollary} 

\theoremstyle{definition} 
\newtheorem{defn}[thm]{Definition}

\newtheorem{rem}[thm]{Remark}

\newtheoremstyle{assumption}
{3pt}
{3pt}
{}
{}
{\bf}
{.}
{.5em}
{\thmname{#1} (\thmnote{#3}\thmnumber{#2})}

\theoremstyle{assumption}
\newtheorem{ass}{Assumption}

\theoremstyle{remark} 

\newcommand{\diff}{\mathrm{d}}
\newcommand{\dint}{\,\mathrm{d}}
\newcommand{\E}{\mathbb{E}}

\newcommand{\conv}{\operatorname{conv}}
\newcommand{\ES}{\operatorname{ES}}

\newcommand{\RVaR}{\operatorname{RVaR}}

\newcommand{\eps}{\varepsilon}


\newcommand{\F}{\mathcal{F}}
\newcommand{\R}{\mathbb{R}}
\newcommand{\A}{\mathsf{A}}

\newcommand{\one}{\mathds{1}}
\newcommand{\interior}{\operatorname{int}}

\newcommand{\VaR}{\operatorname{VaR}}
\DeclareMathOperator*{\argmin}{arg\,min}

\renewcommand{\P}{\mathbb P}

\renewcommand{\a}{\alpha}
\renewcommand{\b}{\beta}
\newcommand{\g}{\gamma}

\renewcommand{\rm}{\normalfont \rmfamily}
\renewcommand{\bf}{\normalfont \bfseries}

\def\be{\begin{equation} \label}
\def\ee{\end{equation}}

\numberwithin{equation}{section} 

\usepackage[colorinlistoftodos,textsize=tiny]{todonotes}
\newcommand{\Comments}{1}
\newcommand{\mynote}[2]{\ifnum\Comments=1\textcolor{#1}{#2}\fi}
\newcommand{\mytodo}[2]{\ifnum\Comments=1%
  \todo[linecolor=#1!80!black,backgroundcolor=#1,bordercolor=#1!80!black]{#2}\fi}

\ifnum\Comments=1               
  \setlength{\marginparwidth}{0.8in}
\fi

\ifnum\Comments=1               
  \setlength{\marginparwidth}{0.8in}
\fi

\begin{document}

\title{Evaluating Range Value at Risk Forecasts\footnote{An earlier version of this paper was circulated under the name \textit{Elicitability of Range Value at Risk}.}}
\author{Tobias Fissler\thanks{WU Vienna University of Economics and Business, Department of Finance, Accounting and Statistics, Welthandelsplatz 1, 1020 Vienna, Austria, 
		e-mail: \href{mailto:tobias.fissler@wu.ac.at}{tobias.fissler@wu.ac.at}} 
		\and Johanna F.~Ziegel\thanks{University of Bern, Department of Mathematics and Statistics, Institute of Mathematical Statistics and Actuarial Science, Alpeneggstrasse 22, 3012 Bern, Switzerland, 
		e-mail: \href{mailto:johanna.ziegel@stat.unibe.ch}{johanna.ziegel@stat.unibe.ch}}}

\maketitle

\begin{abstract}
\textbf{Abstract.}
The debate of what quantitative risk measure to choose in practice has mainly focused on the dichotomy between Value at Risk (VaR)\,---\,a quantile\,---\,and Expected Shortfall (ES)\,---\,a tail expectation. Range Value at Risk (RVaR) is a natural interpolation between these two prominent risk measures, which constitutes a tradeoff between the sensitivity of the latter and the robustness of the former, turning it into a practically relevant risk measure on its own. 
As such, there is a need to statistically validate RVaR forecasts and to compare and rank the performance of different RVaR models, tasks subsumed under the term `backtesting' in finance.
The predictive performance is best evaluated and compared in terms of strictly consistent loss or scoring functions. That is, functions which are minimised in expectation by the correct RVaR forecast.
Much like ES, it has been shown recently that RVaR does not admit strictly consistent scoring functions, i.e., it is not elicitable.
Mitigating this negative result, this paper shows that a triplet of RVaR with two VaR components at different levels is elicitable. We characterise the class of strictly consistent scoring functions for this triplet. Additional properties of these scoring functions are examined, including the diagnostic tool of Murphy diagrams. The results are illustrated with a simulation study, and we put our approach in perspective with respect to the classical approach of trimmed least squares in robust regression.
\end{abstract}
\noindent
\textit{Keywords:}
Backtesting; Consistency; Elicitability; Expected Shortfall; Interquantile expectation; Point forecasts; Robustness; Scoring functions; Trimmed mean; Value at Risk; Winsorized mean

\noindent
\textit{MSC2020 classes:} 62C99; 62G35; 62P05; 91G70

\section{Introduction}


In the field of quantitative risk management, the last one or two decades have seen a lively debate about which monetary risk measure \citep{ArtznerDelbaenETAL1999} be best in (regulatory) practice. The debate mainly focused on the dichotomy between Value at Risk ($\VaR_\b$) on the one hand and Expected Shortfall ($\ES_\b$) on the other hand, at some probability level $\b\in(0,1)$ (see Section \ref{sec:notation} for definitions). 
Mirroring the historical joust between median and mean as centrality measures in classical statistics, $\VaR_\b$, basically a quantile, is esteemed for its robustness, while $\ES_\b$, a tail expectation, is deemed attractive due to its sensitivity and the fact that it satisfies the axioms of a coherent risk measure \citep{ArtznerDelbaenETAL1999}.
We refer the reader to \cite{EmbrechtsETAL2014} and \cite{EmmerKratzTasche2015} for comprehensive academic discussions, and to \cite{BIS2014} for a regulatory perspective in banking.

\cite{ContDeguestETAL2010} considered the issue of statistical robustness of risk measure estimates in the sense of \cite{Hampel1971}. They showed that a risk measure cannot be both robust and coherent. As a compromise, they propose the risk measure `Range Value at Risk', $\RVaR_{\a,\b}$ at probability levels $0<\a<\b<1$.
It is defined as the average of all $\VaR_\gamma$ with $\gamma$ between $\a$ and $\b$ (see Section \ref{sec:notation} for definitions).
As limiting cases, one obtains $\RVaR_{\b,\b} = \VaR_\b$ and $\RVaR_{0,\b} = \ES_\b$, which presents $\RVaR_{\a,\b}$ as a natural interpolation of $\VaR_\b$ and $\ES_\b$. Quantifying its robustness in terms of the \emph{breakdown point} 
and following the arguments provided in \citet[p.\ 59]{HuberRonchetti2009}, $\RVaR_{\a,\b}$ has a breakdown point of $\min\{\a,1-\b\}$, placing it between the very robust $\VaR_\b$ (with a breakdown point of $\min\{\b,1-\b\}$) and the entirely non-robust $\ES_\b$ (breakdown point 0). This means it is a robust\,---\,and hence, not coherent\,---\,risk measure, unless it degenerates to $\RVaR_{0,\b} = \ES_\b$ (or if $0\le \a<\b=1$).
Moreover, $\RVaR$ belongs to the wide class of distortion risk measures \citep{Kusuoka2001}. For further contributions to robustness in the context of risk measures, we refer the reader to \cite{KraetschmeSchiedZaehle2012, KraetschmeSchiedZaehle2014}, \cite{KouPengHeyde2013}, \cite{EmbrechtsWangWang2015} and \cite{Zaehle2016}.
Since the influential article \cite{ContDeguestETAL2010}, RVaR has gained increasing attention in the risk management literature\,---\,see \cite{EmbrechtsETAL2018, EmbrechtsLiuWang2018} for extensive studies\,---\,as well as in econometrics \citep{Barendse2020} where RVaR sometimes has the alternative denomination \emph{Interquantile Expectation}. 
For the symmetric case $\b = 1-\a>1/2$, $\RVaR_{\a,1-\a}$ is known under the term $\alpha$-trimmed mean in classical statistics and it constitutes an alternative to and interpolation of the mean and the median as centrality measures; see \cite{LM2019} for a recent study and a multivariate extension of the trimmed mean. It is closely connected to the $\a$-Winsorized mean, see \eqref{eq:W}.


How to evaluate
the predictive performance of point forecasts, $x_t$, for a statistical functional $T$, such as the mean, median or a risk measure, of the (conditional) distribution of a quantity of interest, $y_t$?
It is commonly measured in terms of the \emph{average realised score} $\frac{1}{n}\sum_{t=1}^n S(x_t,y_t)$ for some scoring or loss function $S$, using the orientation the smaller the better. 
Consequently, the loss function $S$ should be \emph{strictly consistent} for $T$ in that $T(F) = \argmin_x \int S(x,y)\dint F(y)$: Correct predictions are honoured and encouraged in the long run.
E.g., the squared loss $S(x,y) = (x-y)^2$ is consistent for the mean, and the absolute loss $S(x,y) = |x-y|$ is consistent for the median.
If a functional admits a strictly consistent score, it is called \emph{elicitable} \citep{Osband1985, LambertETAL2008, Gneiting2011}. By definition, elicitable functionals allow for $M$-estimation and have natural estimation paradigms in regression frameworks \cite[Section 2]{DFZ2020}, such as quantile regression \citep{KoenkerBasset1978, Koenker2005} or expectile regression \citep{NeweyPowell1987}.
Elicitability is crucial for meaningful forecast evaluation \citep{EngelbergManskiETAL2009, MurphyDaan1985, Gneiting2011}.
In the context of probabilistic forecasts with distributional forecasts $F_t$ or density forecasts $f_t$, (strictly) consistent scoring functions are often referred to as \emph{(strictly) proper rules}, such as the log-score $S(f,y) = -\log f(y)$ \citep{GneitingRaftery2007}.
In quantitative finance, and particularly in the debate about which risk measure is best in practice, elicitability has gained considerable attention \citep{EmmerKratzTasche2015, Ziegel2016, Davis2016}. Especially, the role of elicitability for backtesting purposes has been highly debated \citep{Gneiting2011, AcerbiSzekely2014, AcerbiSzekely2017}. It has been clarified that elicitability is central for \emph{comparative} backtesting \citep{FisslerETAL2016, NoldeZiegel2017}. 

Not all functionals are elicitable. \cite{Osband1985} showed that an elicitable functional necessarily has convex level sets (CxLS): If $T(F_0) = T(F_1)=t$ for two distributions $F_0,F_1$, then $T(F_\lambda)=t$ where $F_\lambda =(1-\lambda)F_0 + \lambda F_1$, $\lambda\in(0,1)$.
Variance and ES generally do not have CxLS \citep{Weber2006, Gneiting2011}, therefore failing to be elicitable. 
The \emph{revelation principle} \citep{Osband1985, Gneiting2011} asserts that any bijection of an elicitable functional is elicitable. This implies that the pair (mean, variance)\,---\,being a bijection of the first two moments\,---\,is elicitable despite the variance failing to be elicitable. Similarly, \cite{FisslerZiegel2016} showed that the pair $(\VaR_\b, \ES_\b)$ is elicitable with the structural difference that the revelation principle is not applicable in this instance. This gave rise to the more general finding that the minimal expected score and its minimiser are always jointly elicitable \citep{Brehmer2017, FrongilloKash2020}.


Recently, \citet[Theorem 5.3]{WangWei2020} showed that $\RVaR_{\a,\b}$, $0<\a<\b<1$, similarly to $\ES_\a$, fails to have the CxLS property, which rules out its elicitability. In contrast, they observe that the identity 
\be{eq:ES difference}
\RVaR_{\a,\b} = \big(\b \ES_\b - \a\ES_\a\big)/(\b-\a), \qquad0<\a<\b<1,
\ee 
and the CxLS property of the pair $(\VaR_\a, \ES_\a)$ implies the CxLS property of the triplet $(\VaR_\a, \VaR_\b, \RVaR_{\a,\b})$ \cite[Example 4.6]{WangWei2020}, leading to the question whether this triplet is elicitable or not.
Invoking the elicitability of $(\VaR_\a, \ES_\a)$, the identity at \eqref{eq:ES difference} and the revelation principle establishes the elicitability of the quadruples $(\VaR_\a, \VaR_\b, \ES_\a, \RVaR_{\a,\b})$ and $(\VaR_\a, \VaR_\b, \ES_\b, \RVaR_{\a,\b})$. This approach has already been used in the context of regression in \cite{Barendse2020}.

\emph{A fortiori}, we show that the triplet $(\VaR_\a, \VaR_\b, \RVaR_{\a,\b})$ is elicitable (Theorem \ref{thm:sufficiency}) under weak regularity conditions. Practically, opens the way to meaningful forecast performance comparison, and in particular comparative backtests, of this triplet, as well as to a regression framework.
Theoretically, this shows that the elicitation complexity \citep{LambertETAL2008, FrongilloKash2020} or elicitation order \citep{FisslerZiegel2016} of $\RVaR_{\a,\b}$ ist at most 3.
Moreover, requiring only VaR-forecasts besides the RVaR-forecast is particularly advantageous to additionally requiring an ES-forecasts since 
the triplet $(\VaR_\a(F), \VaR_\b(F), \RVaR_{\a,\b}(F))$, $0<\a<\b<1$, exists and is finite for any distribution $F$, whereas 
$\ES_\a(F)$ and $\ES_\b(F)$ only exist if the (left) tail of the distribution $F$ is integrable. As $\RVaR_{\a,\b}$ is used often for robustness purposes, safeguarding against outliers and heavy-tailedness, this advantage is important.

We would like to point out the structural difference between the elicitability result of $(\VaR_\a, \VaR_\b, \RVaR_{\a,\b})$ provided in this paper and the one concerning $(\VaR_\a, \ES_\a)$ in \cite{FisslerZiegel2016} as well as the more general results of \cite{FrongilloKash2020} and \cite{Brehmer2017}. While $\ES_\a$ corresponds to the negative of a minimum of an expected score which is strictly consistent for $\VaR_\a$, it turns out that $\RVaR_{\a,\b}$ can be represented as the \emph{difference} of minima of expected strictly consistent scoring functions for $\VaR_\a$ and $\VaR_\b$ (Proposition \ref{prop:identifiability}). As a consequence, the class of strictly consistent scoring functions for the triplet $(\VaR_\a, \VaR_\b, \RVaR_{\a,\b})$ turns out to be less flexible than the one for $(\VaR_\a, \ES_\a)$; see Remark \ref{rem:difference of minima} for details. 
In particular, there is essentially no translation invariant or positively homogeneous scoring function which is strictly consistent for $(\VaR_\a, \VaR_\b, \RVaR_{\a,\b})$; see Section \ref{sec:secondarycrit}.

The paper is organised as follows. In Section \ref{sec:notation}, we introduce the relevant notation and definitions concerning RVaR, scoring functions and elicitability. The main results establishing the elicitability of the triplet $(\VaR_\a, \VaR_\b, \RVaR_{\a,\b})$ (Theorems \ref{thm:sufficiency} and \ref{thm:necessity}) and related findings are presented in Section \ref{sec:results}. Section \ref{sec:secondarycrit} shows that there are basically no strictly consistent scoring functions for $(\VaR_\a, \VaR_\b, \RVaR_{\a,\b})$ which are positively homogeneous or translation invariant. In Section \ref{sec:mixture}, we establish a mixture representation of the strictly consistent scoring functions in the spirit of \cite{EhmETAL2016}. This result allows to compare forecasts simultaneously with respect to \emph{all} consistent scoring functions in terms of \emph{Murphy diagrams}. We demonstrate the applicability of our results and compare the discrimination ability of different scoring functions in a simulation study presented in Section \ref{sec:sim}. The paper finishes in Section \ref{sec:discussion} with a discussion of our results in the context of $M$-estimation and compares them to other suggestions in the statistical literature, in variants of a \emph{trimmed least squares} procedure \citep{KoenkerBasset1978, RuppertCarroll1980, Rousseeuw1984}.

\section{Notation and Definitions}\label{sec:notation}

\subsection{Definition of Range Value at Risk}

There are different sign conventions in the literature on risk measures. In this paper we use the following convention: If a random variable $Y$ models the losses and gains, then positive values of $Y$ represent gains and negative values of $Y$ losses. 
Moreover, if $\rho$ is a risk measure, we assume that $\rho(Y)\in\R$ corresponds to the maximal amount of money one can \emph{withdraw} such that the position $Y-\rho(Y)$ is still acceptable. Hence, negative values of $\rho$ correspond to risky positions. 
In the sequel, let $\F_0$ be the class of probability distribution functions on $\R$. Recall that the $\a$-quantile, $\a\in[0,1]$ of $F\in\F_0$ is defined as the set $q_\a(F) = \{x\in\R\,|\,F(x-)\le \a \le F(x)\}$, where $F(x-):= \lim_{t\uparrow x}F(t)$.

\begin{defn}
\emph{Value at Risk} of $F\in \F_0$ at level $\a\in[0,1]$ is defined as 
$\VaR_\a(F) = \inf q_\a(F)$. 
\end{defn}
For any $\alpha\in[0,1]$ we introduce the following subclasses of $\F_0$:
\begin{align}
\F^{\a}&= \big\{F\in\F_0\,|\,q_\a(F) = \{\VaR_\a(F)\}\big\},
&\F^{(\a)}= \big\{F\in\F_0\,|\,F(\VaR_\a(F))=\a\big\}.
\end{align}
\begin{defn}
\label{defn:IQE}
\emph{Range Value at Risk} of $F\in \F_0$ at levels $0\le \a\le \b\le 1$ is defined as 
\[
\RVaR_{\a,\b}(F) =
\begin{dcases}
\frac{1}{\b-\a}\int_\a^\b \VaR_\g(F)\dint \g, & \text{if } \a < \b,\\[0.5em]
\VaR_\a(F), & \text{if } \a=\b.
\end{dcases}
\]
\end{defn}
The definition of RVaR implies that
\be{eq:inequality}
\VaR_{\a}(F)\le \RVaR_{\a, \b}(F) \le \VaR_{\b}(F).
\ee
For $0<\a\le\b<1$ and $F\in\F_0$ one obtains that 
(i) $\RVaR_{\a,\b}(F)\in\R$;
(ii) $\RVaR_{0,\b}(F)\in\R\cup\{-\infty\}$ and it is finite if and only if $\int_{-\infty}^0|y|\dint F(y)<\infty$; and
(iii) $\RVaR_{\a,1}(F)\in\R\cup\{\infty\}$ and it is finite if and only if $\int_{0}^\infty|y|\dint F(y)<\infty$.
$\RVaR_{0,1}(F)$ exists only if $\int_{-\infty}^0|y|\dint F(y)<\infty$ or $\int_{0}^\infty|y|\dint F(y)<\infty$. If $F$ has a finite first moment, then $\RVaR_{0,1}(F) = \int y \dint F(y)$ coincides with the first moment of $F$.
Provided that $\RVaR_{\a,\b}(F)$ exists it holds that
\be{eq:correction}
\begin{aligned}
\RVaR_{\a,\b}(F) &= \frac{1}{\b-\a}\Bigg(\int_{(\VaR_\a(F),\VaR_\b(F)]}y \dint F(y) \\ &+\VaR_\a(F)\big(F(\VaR_\a(F)) - \a \big) - \VaR_\b(F)\big(F(\VaR_\b(F)) - \b\big) \Bigg),
\end{aligned}
\ee
using the usual conventions $F(-\infty) =0$, $F(\infty)=1$ and $0\cdot\infty = 0\cdot (-\infty) = 0$.
If $F\in\F^{(\a)}\cap\F^{(\b)}$ then the correction terms in the second line of \eqref{eq:correction} vanish, yielding
$\RVaR_{\a,\b}(F) = \E_F[Y\,\one\{\VaR_\a(F)< Y\le \VaR_\b(F)\}]/(\b-\a)$,
which justifies an alternative name for RVaR, namely \emph{Interquantile Expectation}.

\begin{defn}
\emph{Expected Shortfall} of $F\in\F_0$ at level $\a\in(0,1)$ is defined as 
\(
\ES_\a(F) =\RVaR_{0,\a}(F)\in \R\cup\{-\infty\}.
\)
\end{defn}
Hence, provided that $\ES_\a(F), \ES_\b(F)$ are finite, one obtains the identity \eqref{eq:ES difference}.
If $F$ has a finite left tail ($\int_{-\infty}^0 |y|\dint F(y)<\infty$) then one could use the right hand side of \eqref{eq:ES difference} as a definition of $\RVaR_{\a,\b}(F)$. However, in line with our discussion in the introduction, $\RVaR_{\a,\b}(F)$ always exists and is finite for $0<\a<\b<1$ even if the right hand side of \eqref{eq:ES difference} is not defined. 

Interestingly, \citet[Theorem 2]{EmbrechtsLiuWang2018} establish that $\RVaR$ can be written as an inf-convolution of $\VaR$ and $\ES$ at appropriate levels. 
This result amounts to a sup-convolution in our sign convention. Also note that our parametrisation of of $\RVaR_{\a,\b}$ differs from theirs.

For $\a\in(0,1/2)$, $\RVaR_{\a, 1-\a}$ corresponds to the \emph{$\a$-trimmed mean} and has a close connection to the \emph{$\a$-Winsorized mean} $W_\a$ \cite[pp.\ 57--59]{HuberRonchetti2009} via
\be{eq:W}
W_\a(F):= (1-2\a) \RVaR_{\a, 1-\a}(F) + \a \VaR_\a(F) + \a \VaR_{1-\a}(F), \quad \a\in(0,1/2).
\ee

\subsection{Elicitability and scoring functions}

Using the decision-theoretic framework of \cite{FisslerZiegel2016} and \cite{Gneiting2011}, we introduce the following notation.
Let $\F\subseteq \F_0$ be some generic subclass, and $\A\subseteq\R^k$ be an \emph{action domain}.
Whenever we consider a functional $T\colon\F\to\A$, we tacitly assume that $T(F)$ is well-defined for all $F\in\F$ and is an element of $\A$. $T(\F)$ corresponds to the image $\{T(F)\in\A\,|\,F\in\F\}$. For any subset $M\subseteq \R^k$ we denote with $\interior(M)$ the largest open subset of $M$. Moreover, $\conv(M)$ denotes the convex hull of the set $M$.

We say that a function $a\colon\R\to\R$ is $\F$-integrable if it is measurable and $\int |a(y)|\dint F(y)<\infty$ for all $F\in\F$. Similarly, a function $g\colon\A\times\R\to\R$ is called $\F$-integrable if $g(x,\cdot)\colon \R\to\R$ is $\F$-integrable for all $x\in\A$. If $g$ is $\F$-integrable, we define the map
$\bar g\colon \A\times \F\to\R$, $\bar g(x,F) := \int g(x,y)\dint F(y)$.
If $g\colon\A\times\R\to\R$ is sufficiently smooth in its first argument, we denote the $m$th partial derivative of $g(\cdot, y)$ with $\partial_m g(\cdot, y)$.

\begin{defn}
A map $S\colon\A\times\R\to\R$ is an \emph{$\F$-consistent scoring function} for $T\colon\F\to\A$ if it is 
$\F$-integrable and if $\bar S(T(F),F)\le \bar S(x,F)$ for all $x\in\A$, $F\in\F$. It is \emph{strictly} $\F$-consistent for $T$ if it is consistent and if $\bar S(T(F),F) =  \bar S(x,F)$ implies that $x=T(F)$ for all $x\in\A$ and for all $F\in\F$. 
A functional $T\colon\F\to\A$ is \emph{elicitable} on $\F$ if it possesses a strictly $\F$-consistent scoring function.
\end{defn}

\begin{defn}
Two scoring function $S, \widetilde S\colon\A\times\R\to\R$ are \emph{equivalent} if there is some $a\colon\R\to\R$ and some $\lambda>0$ such that $\widetilde S(x,y) = \lambda S(x,y) + a(y)$ for all $(x,y)\in\A\times\R$.
They are \emph{strongly} equivalent if additionally $a\equiv0$.
\end{defn}
This equivalence relation preserves (strict) consistency: If $S$ is (strictly) $\F$-consistent for $T$ and if $a$ is $\F$-integrable, then $\widetilde S$ is also (strictly) $\F$-consistent for $T$.
Closely related to the concept of elicitability is the notion of \emph{identifiability}.
\begin{defn}
A map $V\colon\A\times \R\to\R^k$ is an \emph{$\F$-identification function} for $T\colon\F\to\A$ if it is 
$\F$-integrable and if $\bar V(T(F),F) = 0$ for all $F\in\F$. It is a \emph{strict} $\F$-identification function for $T$ if additionally $\bar V(x,F)=0$ implies that $x=T(F)$ for all $x\in\A$ and for all $F\in\F$.
it is consistent and if $\bar S(T(F),F) =  \bar S(x,F)$ implies that $x=T(F)$ for all $x\in\A$ and for all $F\in\F$. 
A functional $T\colon\F\to\A$ is \emph{elicitable} if it possesses a strictly $\F$-consistent scoring function.
A functional $T\colon\F\to\A$ is \emph{identifiable} on $\F$ if it possesses a strict $\F$-identification function.
\end{defn}
In contrast to \cite{Gneiting2011} we consider point-valued functionals only. For a recent comprehensive study on elicitability of set-valued functionals we refer to \cite{FisslerFrongilloHlavinovaRudloff2020}.
For the sake of completeness, we list some assumptions used in Section \ref{sec:results} which were originally introduced in \cite{FisslerZiegel2016} in the Appendix.

\section{Elicitability and identifiability results}\label{sec:results}

\citet[Theorem 5.3]{WangWei2020} show that for $0<\a<\b<1$, $
\RVaR_{\a,\b}$ (and also the pairs $(\VaR_{\a}, \RVaR_{\a,\b})$ and $(\VaR_{\b}, \RVaR_{\a,\b})$) do not have CxLS on $\F_{\text{dis}}$, the class of distributions with bounded and discrete support. Hence, invoking that CxLS are necessary for elicitability and identifiability, $
\RVaR_{\a,\b}$ and the pairs $(\VaR_{\a}, \RVaR_{\a,\b})$ and $(\VaR_{\b}, \RVaR_{\a,\b})$ fail to be elicitable and identifiable on $\F_{\text{dis}}$.
Our novel contribution is that the \emph{triplet} $(\VaR_\a, \VaR_\b, \RVaR_{\a, \b})$, however, is elicitable and identifiable, subject to mild conditions.
We use the notation $S_\a(x,y) = (\one\{y\le x\} - \a)x - \one\{y\le x\}y$, and recall that $S_\a$ is $\F$-consistent for $\VaR_\a$ 
if $\int_{-\infty}^0 |y|\dint F(y)<\infty$ for all $F\in\F$, 
and strictly $\F$-consistent if furthermore $\F\subseteq \F^{\a}$ \citep{Gneiting2011}.

\begin{prop}\label{prop:identifiability}
For $0<\a<\b<1$ the map $V\colon\R^3\times \R \to\R^3$ 
\begin{align}\label{eq:identification}
V(x_1,x_2,x_3,y) \
= \begin{pmatrix}
\one\{y\le x_1\} -\a \\
\one\{y\le x_2\} -\b \\
x_3 + \frac{1}{\b - \a} 
\big(S_\b(x_2,y) - S_\a(x_1,y)\big)
\end{pmatrix}
\end{align}
is an $\F^{(\a)}\cap\F^{(\b)}$-identification function for $(\VaR_\a, \VaR_\b, \RVaR_{\a, \b})$, which is strict on $\F^{\a}\cap\F^{(\a)}\cap\F^{\b}\cap\F^{(\b)}$.
\end{prop}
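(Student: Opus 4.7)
\textbf{Proof plan for Proposition \ref{prop:identifiability}.} The plan is to verify integrability, establish the identification property by direct computation, and then deduce strictness on the refined class by reading the three equations off in order.

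\emph{Step 1 (integrability).} The first two components of $V$ are bounded in $y$, hence trivially $\F_0$-integrable. For the third component, I would rewrite
\begin{align*}
S_\b(x_2,y) - S_\a(x_1,y) = (F\text{-independent terms}) - \one\{y\le x_2\}y + \one\{y\le x_1\}y,
\end{align*}
and observe that $-\one\{y\le x_2\}y + \one\{y\le x_1\}y$ equals $-\one\{x_1<y\le x_2\}y$ if $x_1<x_2$ and $+\one\{x_2<y\le x_1\}y$ otherwise; either way it is bounded in absolute value by $\max(|x_1|,|x_2|)$. Hence $V(x_1,x_2,x_3,\cdot)$ is bounded, so $\F_0$-integrable (no tail assumption needed, which is exactly the point stressed in the introduction).

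\emph{Step 2 (identification on $\F^{(\a)}\cap\F^{(\b)}$).} Taking expectations, one gets $\bar V_1(x_1,F)=F(x_1)-\a$ and $\bar V_2(x_2,F)=F(x_2)-\b$, which both vanish at $x_1=\VaR_\a(F)$, $x_2=\VaR_\b(F)$ by the definition of $\F^{(\a)}\cap\F^{(\b)}$. For the third component, I compute
\begin{align*}
\bar S_\b(x_2,F) - \bar S_\a(x_1,F)
= (F(x_2)-\b)x_2 - (F(x_1)-\a)x_1 - \int_{(x_1,x_2]} y\dint F(y),
\end{align*}
assuming $x_1\le x_2$. Evaluating at $x_1=\VaR_\a(F)$, $x_2=\VaR_\b(F)$, the boundary terms vanish because $F(\VaR_\a(F))=\a$, $F(\VaR_\b(F))=\b$, and what remains is exactly $-(\b-\a)\,\RVaR_{\a,\b}(F)$ by the representation in \eqref{eq:correction} (whose correction terms vanish on $\F^{(\a)}\cap\F^{(\b)}$). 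Therefore $\bar V_3=\RVaR_{\a,\b}(F) - \RVaR_{\a,\b}(F) = 0$.

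\emph{Step 3 (strictness on $\F^{\a}\cap\F^{(\a)}\cap\F^{\b}\cap\F^{(\b)}$).} Suppose $\bar V(x_1,x_2,x_3,F)=0$. From $\bar V_1=0$ we get $F(x_1)=\a$, and monotonicity of $F$ then forces $F(x_1-)\le\a\le F(x_1)$, so $x_1\in q_\a(F)$; on $\F^{\a}$ this set is the singleton $\{\VaR_\a(F)\}$, so $x_1=\VaR_\a(F)$. The same reasoning with $\b$ in place of $\a$ gives $x_2=\VaR_\b(F)$. Substituting these into the third equation and using the computation of Step 2 determines $x_3$ uniquely as $\RVaR_{\a,\b}(F)$.

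The only subtlety is the very first step: one needs to notice the cancellation of $\one\{y\le x_i\}y$ across the two score differences to avoid imposing integrability of the tail, which would otherwise be the main obstacle (and which would defeat the advertised advantage over an ES-based identification). Once the cancellation is in hand, the rest is an essentially mechanical computation driven by \eqref{eq:correction} and the definitions of the quantile subclasses.
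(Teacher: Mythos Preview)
Your proposal is correct and follows essentially the same approach as the paper, which simply records the key identity $\bar V_3(\VaR_\a(F),\VaR_\b(F),x_3,F) = x_3 - \RVaR_{\a,\b}(F)$ and defers to \eqref{eq:correction}. Your explicit verification of integrability in Step~1 via the cancellation of the $\one\{y\le x_i\}y$ terms is a useful addition that the paper leaves implicit; one small notational caveat is that in Step~2 the expression $\bar S_\b(x_2,F) - \bar S_\a(x_1,F)$ should be read as the expectation of the difference (well-defined by Step~1) rather than as a difference of two expectations, since the latter need not exist individually without a left-tail moment.
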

\begin{proof}
The proof is standard, observing that 
\be{eq:V_3}
\bar V_3(\VaR_\a(F),\VaR_\b(F),x_3,F) = x_3 - \RVaR_{\a,\b}(F),
\ee
which follows from the representation \eqref{eq:correction}.
\end{proof}

The following theorem establishes a rich class of (strictly) consistent scoring functions $S\colon\R^3\times\R\to\R$ for $(\VaR_{\a}, \VaR_{\b}, \RVaR_{\a, \b})$. By \textit{a priori} assuming forecasts to be bounded with values in some cube $[c_{\min}, c_{\max}]^3$, $-\infty\le c_{\min}<c_{\max}\le \infty$ (with the tacit convention that $[c_{\min}, c_{\max}] := [c_{\min}, c_{\max}]\cap  \R$ if $c_{\min}=-\infty$ or $c_{\max}=\infty$), the class gets even broader.

\begin{thm}\label{thm:sufficiency}
For $0<\a<\b<1$, the map $S\colon[c_{\min}, c_{\max}]^3 \times\R\to \R$\begin{align}\label{eq:S}
S(x_1,x_2,x_3,y)
&=  \big(\one\{y\le x_1\} - \a\big)g_1(x_1) - \one\{y\le x_1\}g_1(y) \\ \nonumber
&+\big(\one\{y\le x_2\} - \b\big)g_2(x_2) - \one\{y\le x_2\}g_2(y)\\ \nonumber
&+ \phi'(x_3)\Big(x_3 + \frac{1}{\b - \a} \big(S_\b(x_2,y) - S_\a(x_1,y) \big)\Big)- \phi(x_3) +a(y),
\end{align}
is an $\F$-consistent scoring function for $(\VaR_{\a}, \VaR_{\b}, \RVaR_{\a, \b})$ 
if 
\begin{enumerate}[\rm (i)]
\item
\label{item:conv}
$\phi\colon [c_{\min}, c_{\max}]\to\R$ is convex with subgradient $\phi'$,
\item
\label{item:incr}
for all $x_3\in[c_{\min}, c_{\max}]$ the functions
\begin{align}
\label{eq:G1}
&G_{1,x_3}\colon [c_{\min}, c_{\max}]\to \R, \qquad x_1\mapsto g_1(x_1) - x_1 \phi'(x_3) /(\b-\a), \\
\label{eq:G2}
&G_{2,x_3}\colon [c_{\min}, c_{\max}] \to \R, \qquad x_2\mapsto g_2(x_2) + x_2 \phi'(x_3) /(\b-\a)
\end{align}
are increasing, and
\item
\label{item:integr}
$y\mapsto a(y) - \one\{y\le x_1\}g_1(y)- \one\{y\le x_2\}g_2(y)$ is $\F$-integrable for all $x_1,x_2\in [c_{\min}, c_{\max}]$.
\end{enumerate}
If moreover $\phi$ is strictly convex, and the functions at \eqref{eq:G1} and \eqref{eq:G2} are strictly increasing, then $S$ is strictly $\F^\a\cap\F^\b$-consistent for $T$.
\end{thm}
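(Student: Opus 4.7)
My plan is to reduce the score to two standard Gneiting-type quantile scores plus a remainder that is a convex-type score for $\RVaR_{\a,\b}$. First, I would rewrite $S$ by expanding $S_\a(x_1,y)$ and $S_\b(x_2,y)$ in the third line and regrouping the $\one\{y\le x_1\}$-terms with the first line and the $\one\{y\le x_2\}$-terms with the second line. The key algebraic observation is that, with $c = \phi'(x_3)/(\b-\a)$, one obtains
\begin{align*}
S(x_1,x_2,x_3,y) &= \big(\one\{y\le x_1\}-\a\big)G_{1,x_3}(x_1) - \one\{y\le x_1\}G_{1,x_3}(y) \\
&\quad + \big(\one\{y\le x_2\}-\b\big)G_{2,x_3}(x_2) - \one\{y\le x_2\}G_{2,x_3}(y) \\
&\quad + \phi'(x_3)x_3 - \phi(x_3) + a(y),
\end{align*}
with $G_{1,x_3}$, $G_{2,x_3}$ as defined in \eqref{eq:G1}, \eqref{eq:G2}. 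Condition (iii) together with $\F$-integrability of $g_1$ on the left tail (implicit in $\F$-consistency for the quantile part) guarantees $\F$-integrability.

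Second, I would fix $x_3 \in [c_{\min}, c_{\max}]$ and minimise over $x_1, x_2$. Since $G_{1,x_3}$ and $G_{2,x_3}$ are increasing by assumption (ii), the first and second lines above are just Gneiting's (2011) consistent scoring functions for $\VaR_\a$ and $\VaR_\b$; hence for any $F \in \F$,
\[
\bar S(x_1,x_2,x_3,F) \ge \bar S(\VaR_\a(F),\VaR_\b(F),x_3,F),
\]
with strict inequality when $x_1 \ne \VaR_\a(F)$ or $x_2 \ne \VaR_\b(F)$ under $F\in\F^\a\cap\F^\b$ and strict monotonicity of the $G_{i,x_3}$.

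Third, I would substitute $x_1 = v_1 := \VaR_\a(F)$ and $x_2 = v_2 := \VaR_\b(F)$ and collect the dependence on $x_3$. All $g_1$- and $g_2$-terms become constants in $x_3$, while the $c$-linear terms from both quantile scores combine into
\[
\tfrac{\phi'(x_3)}{\b-\a}\Big[-\!\!\int_{(v_1,v_2]}\!\! y\,dF(y) + v_2(F(v_2)-\b) - v_1(F(v_1)-\a)\Big].
\]
Invoking the correction formula \eqref{eq:correction}, the bracket equals $-(\b-\a)\RVaR_{\a,\b}(F) =: -(\b-\a)r$. Hence
\[
h(x_3):=\bar S(v_1,v_2,x_3,F) = K + \phi'(x_3)(x_3-r) - \phi(x_3),
\]
where $K$ is independent of $x_3$. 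The subgradient inequality $\phi(r) \ge \phi(x_3) + \phi'(x_3)(r-x_3)$ gives $h(x_3) \ge K - \phi(r) = h(r)$, with strict inequality for $x_3 \ne r$ when $\phi$ is strictly convex. Combining the two minimisation steps yields consistency, and strict consistency under the additional strictness assumptions on $F$.

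The main obstacle is the algebraic bookkeeping in the third step: keeping careful track of signs and of the boundary atoms $(F(v_i)-\text{level})v_i$, so that the $\RVaR$-correction formula \eqref{eq:correction} can be invoked cleanly. Everything else is a clean decoupling of the $(x_1,x_2)$- and $x_3$-optimisations, relying on the Gneiting class for quantiles and on the standard Bregman-type inequality for convex $\phi$.
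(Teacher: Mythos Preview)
Your proposal is correct and follows essentially the same route as the paper: decouple the minimisation into a quantile part in $(x_1,x_2)$ for fixed $x_3$ (using that $G_{1,x_3}$, $G_{2,x_3}$ are increasing, so the corresponding pieces are Gneiting-type quantile scores) and then a Bregman/subgradient inequality in $x_3$. The only cosmetic difference is that you spell out the algebraic regrouping and invoke the correction formula \eqref{eq:correction} directly, whereas the paper states the quantile-consistency observation without the explicit rewrite and handles the $x_3$-step via the identity $\bar V_3(t_1,t_2,x_3,F)=x_3-t_3$ from Proposition~\ref{prop:identifiability}.
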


\begin{proof}
Let $(x_1,x_2,x_3)\in\A$, $F\in\F$ and $(t_1,t_2,t_3):=T(F)$. Then, since $G_{1,x_3}$ is increasing, $[c_{\min},c_{\max}]\times \R\ni (x_1',y) \mapsto S(x_1',x_2,x_3,y)$ is $\F$-consistent for $\VaR_\a$ and it is strictly $\F^\a$-consistent if $G_{1,x_3}$ is strictly increasing. Similar comments apply to the map $[c_{\min},c_{\max}]\times \R\ni (x_2',y) \mapsto S(t_1,x_2',x_3,y)$. Hence,
\begin{align*}
0 &\le \bar S(x_1,x_2,x_3,F) - \bar S(t_1,x_2,x_3,F) + \bar S(t_1,x_2,x_3,F) - \bar S(t_1,t_2,x_3,F)\\
&= \bar S(x_1,x_2,x_3,F) - \bar S(t_1,t_2,x_3,F)
\end{align*}
with a strict inequality under the conditions for strict consistency and if $(x_1,x_2)\neq (t_1,t_2)$.
Finally,
\begin{equation}\label{eq:proof_3}
\bar S(t_1,t_2,x_3,F) - \bar S(t_1,t_2,t_3,F) 
= \phi'(x_3)(x_3 - t_3) - \phi(x_3) + \phi(t_3)\ge0,
\end{equation}
since $\phi$ is convex. If $\phi$ is strictly convex and if $x_3\neq t_3$, the inequality in \eqref{eq:proof_3} is strict.
\end{proof}

\begin{rem}\label{rem:thm suff}
Provided condition \eqref{item:integr} in Theorem \ref{thm:sufficiency} holds and 
if $\phi$ is strictly convex, and $G_{1,x_3}$ and $G_{2,x_3}$ strictly increasing then $S$ given in \eqref{eq:S} 
is still strictly $\F$-consistent in the $\RVaR$-component for general $\F\subseteq \F_0$. That is, for $F\in\F$
\[
\argmin_{x\in\A_0} \bar S(x,F) = q_\a(F)\times q_\b(F) \times \{\RVaR_{\a,\b}(F)\}.
\]
\end{rem}

Making use of \eqref{eq:W} and the revelation principle \citep{Osband1985, Gneiting2011, Fissler2017}, Theorem \ref{thm:sufficiency} also provides a rich class of strictly consistent scoring function for $(\VaR_\a, \VaR_{1-\a}, W_\a)$, where $W_\a$ is the $\a$-Winsorized mean.
The following proposition is useful to construct examples; see Section \ref{sec:sim}. 

\begin{prop}\label{prop:simplecond}
Let $S$ be of the form \eqref{eq:S} with a (strictly) convex and non-constant function $\phi$, and functions $g_1$, $g_2$ such that the functions at \eqref{eq:G1} and \eqref{eq:G2} are (strictly) increasing and condition \eqref{item:integr} of Theorem \ref{thm:sufficiency} is satisfied. 
Then the following holds:
\begin{enumerate}[\rm (i)]
\item The subgradient $\phi'$ of $\phi$ is necessarily bounded and the one-sided derivatives of $g_1$ and $g_2$ are necessarily bounded from below.
\item $S$ is strongly equivalent to a scoring function $\tilde{S}$ of the form $\eqref{eq:S}$ with a (strictly) convex function $\tilde\phi$ such that $\tilde \phi'$ is bounded with $\beta-\alpha = -\inf_{x \in [c_{\min},c_{\max}]} \tilde\phi'(x)=\sup_{x \in [c_{\min},c_{\max}]} \tilde\phi'(x) $, and strictly increasing functions $\tilde g_1$, $\tilde g_2$ such that their one-sided derivatives are bounded from below by one and such that such that the functions at \eqref{eq:G1} and \eqref{eq:G2} are (strictly) increasing and condition \eqref{item:integr} of Theorem \ref{thm:sufficiency} is satisfied.
\end{enumerate}
\end{prop}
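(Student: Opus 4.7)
For part (i), I'd exploit the universal quantifier over $x_3$ in the monotonicity hypothesis on $G_{1,x_3}$ and $G_{2,x_3}$. Fixing any pair $x_1<x_1'$ in $[c_{\min},c_{\max}]$, the monotonicity of $G_{1,x_3}$ yields the secant bound $g_1(x_1')-g_1(x_1)\ge (x_1'-x_1)\phi'(x_3)/(\beta-\alpha)$ valid \emph{for every admissible $x_3$}, which forces $\sup_{x_3}\phi'(x_3)<\infty$. The symmetric manipulation with $G_{2,x_3}$ forces $\inf_{x_3}\phi'(x_3)>-\infty$, so $\phi'$ is bounded; write $M:=\sup\phi'$ and $m:=\inf\phi'$. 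Feeding the now-finite bounds back into the same secant inequalities yields that the one-sided derivatives of $g_1$ are bounded below by $M/(\beta-\alpha)$ and those of $g_2$ by $-m/(\beta-\alpha)$, wherever they exist, which is almost everywhere since each $g_i$ differs from a monotone function by an affine term.

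For part (ii), the structural observation is that $S$ in \eqref{eq:S} admits a two-parameter family of reparametrizations. First, for every $c\in\R$, the joint replacement $\phi\mapsto\phi-cx$, $g_1\mapsto g_1-\tfrac{c}{\beta-\alpha}x$, $g_2\mapsto g_2+\tfrac{c}{\beta-\alpha}x$ leaves $S$ unchanged: the linear contribution to the third block of \eqref{eq:S} coming from the shift in $\phi$ equals $\tfrac{c}{\beta-\alpha}\bigl(S_\alpha(x_1,y)-S_\beta(x_2,y)\bigr)$, which is exactly cancelled by the matching changes in the first and second blocks. Second, $S$ can be rescaled by any $\lambda>0$ by replacing $(\phi,g_1,g_2,a)$ with $(\lambda\phi,\lambda g_1,\lambda g_2,\lambda a)$.

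Choosing $c:=(M+m)/2$ and $\lambda:=2(\beta-\alpha)/(M-m)$, I would then define $\tilde\phi:=\lambda(\phi-cx)$, $\tilde g_1:=\lambda(g_1-\tfrac{c}{\beta-\alpha}x)$, $\tilde g_2:=\lambda(g_2+\tfrac{c}{\beta-\alpha}x)$ and $\tilde a:=\lambda a$. A direct term-by-term expansion then confirms $\tilde S=\lambda S$, so the two scoring functions are strongly equivalent. The prescribed normalizations follow by inspection: $\tilde\phi'=\lambda(\phi'-c)$ has supremum $\lambda(M-c)=\beta-\alpha$ and infimum $-(\beta-\alpha)$; convexity and, in the strict case, strict convexity of $\tilde\phi$ are preserved by subtracting a linear term and multiplying by $\lambda>0$; and the lower bound on $\tilde g_i'$ collapses to $\lambda(M-c)/(\beta-\alpha)=1$, which being strictly positive entails strict monotonicity of $\tilde g_i$. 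The remaining hypotheses of Theorem~\ref{thm:sufficiency} are inherited because $\tilde G_{i,x_3}=\lambda G_{i,x_3}$; the only non-automatic check is that the integrand in condition \eqref{item:integr} of Theorem~\ref{thm:sufficiency} is perturbed only by $\tfrac{\lambda c}{\beta-\alpha}\,y\,(\one\{y\le x_1\}-\one\{y\le x_2\})$, which is bounded uniformly in $y$ (its support in $y$ lies in the compact interval between $x_1$ and $x_2$) and hence $\F$-integrable.

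The main obstacle is the degenerate case $M=m$, in which $\phi$ is affine, $\lambda$ is undefined, and the third block of \eqref{eq:S} is independent of $x_3$; the target $\sup\tilde\phi'=-\inf\tilde\phi'=\beta-\alpha>0$ then cannot be achieved by any scoring function of the form \eqref{eq:S} that is strongly equivalent to $S$. I would interpret ``non-constant'' in the hypothesis as ruling out affine $\phi$ (the only regime in which the proposition is substantive, since only then does $S$ actually discriminate RVaR-forecasts), and flag this convention at the outset of the proof of~(ii).
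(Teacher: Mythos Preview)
Your proof is correct and follows essentially the same route as the paper: for (i), sandwich $\phi'(x_3)/(\beta-\alpha)$ between secant slopes of $g_1$ (from above) and $-g_2$ (from below); for (ii), exploit the affine-shift invariance $\phi\mapsto\phi+cx$, $g_1\mapsto g_1+cx/(\beta-\alpha)$, $g_2\mapsto g_2-cx/(\beta-\alpha)$ to centre $\phi'$, then rescale. Your version is in fact slightly more careful than the paper's in two respects: you give the explicit constants $c=(M+m)/2$ and $\lambda=2(\beta-\alpha)/(M-m)$ and verify directly that $\tilde G_{i,x_3}=\lambda G_{i,x_3}$ and that the perturbation to condition~\eqref{item:integr} is compactly supported in~$y$; and you flag the degenerate case $M=m$ (affine~$\phi$), where the paper's own justification ``$\lambda>0$ since $\phi$ is non-constant'' is imprecise, since a non-constant affine $\phi$ still has constant~$\phi'$. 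Your reading---that ``non-constant'' should exclude affine $\phi$, this being the only regime where~(ii) can hold---is the intended one.
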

\begin{proof}
\begin{enumerate}[\rm (i)]
\item
The proof is similar to the one of Corollary 5.5 in \cite{FisslerZiegel2016}: 
Condition \eqref{item:incr} implies that for any  $x_1, x_1', x_2,x_2', x_3\in[c_{\min},c_{\max}]$ with $x_1<x_1'$ and $x_2<x_2'$ it holds that 
 \be{eq:bounded phi}
 -\infty< -\frac{g_2(x'_2) - g_1(x_2)}{x_2' - x_2}\le \frac{\phi'(x_3)}{\b-\a} \le \frac{g_1(x_1') - g_1(x_1)}{x_1' - x_1}<\infty.
 \ee 
 Therefore, $\phi'$ is bounded, and the one-sided derivative of $g_1$ is bounded from below by $\sup_{x_3} \phi'(x_3)/(\b-\a)$ while the one-sided derivative of $g_2$ is bounded from below by $-\inf_{x_3}\phi'(x_3)/(\b-\a)$.
\item 
For any $c \in \R$, if we replace $\phi$ with $\widehat \phi: x \mapsto \phi(x) + c x $, $g_1$ with $\widehat g_1: x \mapsto g_1(x) + cx/(\beta-\alpha)$, and $g_2$ with $\widehat g_2: x \mapsto g_2(x) - cx/(\beta-\alpha)$ in the formula \eqref{eq:S} for $S$, then $S$ does not change. Also $\widehat \phi$ is (strictly) convex if and only if $\phi$ is (strictly) convex. Furthermore, conditions \eqref{item:incr} and \eqref{item:integr} of Theorem \ref{thm:sufficiency} hold for $\phi$, $g_1$, $g_2$ if and only if they hold for $\widehat \phi$, $\widehat g_1$ and $\widehat g_2$. By part (i) of the proposition $\phi'$ is bounded. Therefore, we can assume without loss of generality that $-\inf_{x \in [c_{\min},c_{\max}]} \phi'(x)=\sup_{x \in [c_{\min},c_{\max}
} \phi'(x)=\lambda>0$, since $\phi$ is non-constant. 
Then the argument follows by setting $\tilde S = \frac{\lambda}{\b-\a}S$.
\end{enumerate}
\end{proof}

Invoking the inequality \eqref{eq:inequality} the triplet $(\VaR_{\a}, \VaR_{\b}, \RVaR_{\a, \b})$ can only attain values in the domain $\A_0:= \{(x_1,x_2,x_3)\in\R^3\,|\, x_1\le x_3 \le x_2\}$. 
Therefore, we call $\A_0$ the \emph{maximal sensible action domain}. 
Issuing forecasts for $T$ outside $\A_0$, thus violating \eqref{eq:inequality} would be irrational, corresponding to, say, negative variance forecasts. 
Still, the scoring functions of the form \eqref{eq:S} allow for the evaluation of forecasts violating \eqref{eq:inequality}.
Striving for a necessary characterisation result of (strictly) consistent scoring functions for $(\VaR_{\a}, \VaR_{\b}, \RVaR_{\a, \b})$, it is immediate to realise that there is flexibility in $[c_{\min},c_{\max}]^3 \setminus \A_0$ since one could possibly set the score to infinity there and would still preserve (strict) consistency.
Therefore, it is not astonishing that a necessary characterisation result works only on domains $\A\subseteq \A_0$.
The key to such a necessary characterisation is 
Osband's principle \cite[Theorem 3.2]{FisslerZiegel2016} originating from the seminal dissertation of \cite{Osband1985}.
Since it exploits a first-order condition of the minimisation of the expected score, the main assumptions of the result consist of smoothness assumptions on expected score as well as richness assumptions on the underlying class of distributions $\F$; see Appendix for the detailed technical formulations and \cite{FisslerZiegel2016} for a discussion of these conditions.

We introduce the class $\F_{\mathrm{cont}}\subset \F_0$ of distributions which are continuously differentiable and with a strictly positive derivative\,/\,density. (Clearly $\F_{\mathrm{cont}} \subset \F^{\gamma} \cap \F^{(\gamma)}$ for any $\gamma\in(0,1)$.)
For any $\A\subseteq \R^3$, we denote the projections on the $r$th component by $\A'_r:= \{x_r\in\R\,|\,\exists (z_1,z_2,z_3)\in\A,\ z_r = x_r\}$, $r\in\{1,2,3\}$.
For any $x_3\in\A'_3$ and $m\in\{1,2\}$, let
$\A'_{m,x_3}:= \{x_m\in\R\,|\,\exists (z_1,z_2, z_3)\in\A,\, z_m = x_m, \, z_3 = x_3\}$.

\begin{thm}\label{thm:necessity}
Let $\F\subseteq \F_{\mathrm{cont}}$, $0<\a<\b<1$, $T=(\VaR_{\a}, \VaR_{\b}, \RVaR_{\a, \b})\colon \F\to\A\subseteq \A_0$, and let $V=(V_1,V_2,V_3)^\intercal$ defined at \eqref{eq:identification}.
If  Assumptions (V\ref{ass:V1}), and (F\ref{ass:F1}) hold and $(V_1,V_2)^\intercal$ satisfies Assumption (V\ref{ass:V4}), then any 
strictly $\F$-consistent scoring function $S\colon \A\times \R\to\R$ for $T$ that satisfies assumptions (VS\ref{ass:VS1}) and (S\ref{ass:S2}) is necessarily of the form \eqref{eq:S} almost everywhere, where the functions $G_{r,x_3}\colon\A'_{r,x_3}\to\R$, $r\in\{1,2\}$, $x_3\in\A'_3$, in \eqref{eq:G1} and \eqref{eq:G2} are strictly increasing and $\phi\colon \A'_3\to\R$ is strictly convex.
\end{thm}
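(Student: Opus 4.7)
The plan is to invoke Osband's principle (Theorem~3.2 of \cite{FisslerZiegel2016}), which under Assumptions (V\ref{ass:V1}), (VS\ref{ass:VS1}), (S\ref{ass:S2}), and (F\ref{ass:F1}) will yield a measurable matrix-valued map $h\colon\interior(\A)\to\R^{3\times 3}$ with
\[
\nabla \bar S(x,F) = h(x)\,\bar V(x,F)
\]
on $\interior(\A)\times\F$. I would then pin down the entries of $h$ by exploiting the symmetry of mixed second partial derivatives of $\bar S$ together with the freedom to vary $F\in\F$.

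\textbf{Showing $h$ is diagonal.}
First I would compute $\nabla\bar V$. Since $\F\subseteq\F_{\mathrm{cont}}$, differentiation under the integral gives $\partial_1\bar V_1 = f(x_1)$, $\partial_2\bar V_2 = f(x_2)$, $\partial_3\bar V_3 = 1$, $\partial_1\bar V_3 = -\bar V_1/(\b-\a)$, $\partial_2\bar V_3 = \bar V_2/(\b-\a)$, while the remaining partials vanish. At $x=T(F)$ the matrix $(\partial_j\bar V_k)_{j,k}$ is therefore diagonal with entries $f(t_1),f(t_2),1$. Differentiating $\nabla\bar S = h\bar V$, using $\bar V(T(F),F)=0$, and equating mixed second partials of $\bar S$ at $T(F)$ yields
\[
\sum_k h_{ik}(T(F))\,\partial_j\bar V_k(T(F),F) = \sum_k h_{jk}(T(F))\,\partial_i\bar V_k(T(F),F).
\]
Running through the pairs $(i,j)\in\{(1,2),(1,3),(2,3)\}$ and using Assumption~(F\ref{ass:F1}) to vary $f(t_1),f(t_2)$ independently with $T(F)$ fixed, I would conclude that all off-diagonal entries of $h$ vanish on $\interior(\A)$.

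\textbf{Shape of the diagonals and reconstruction of $S$.}
With $h$ diagonal, $\partial_i\bar S = h_{ii}(x)\bar V_i$, and symmetry of the full Hessian of $\bar S$ yields three further identities on $\interior(\A)\times\F$. The pair $(1,2)$ would give $\partial_2 h_{11}(x)\bar V_1 = \partial_1 h_{22}(x)\bar V_2$, and independent variation of $\bar V_1,\bar V_2$ provided by Assumption~(V\ref{ass:V4}) forces $\partial_2 h_{11}\equiv\partial_1 h_{22}\equiv 0$. The pair $(1,3)$, using $\partial_1\bar V_3 = -\bar V_1/(\b-\a)$, would give
\[
\bigl(\partial_3 h_{11}(x) + h_{33}(x)/(\b-\a)\bigr)\bar V_1 = \partial_1 h_{33}(x)\,\bar V_3,
\]
so that $\partial_1 h_{33}\equiv 0$ and $\partial_3 h_{11} = -h_{33}/(\b-\a)$; analogously $(2,3)$ gives $\partial_2 h_{33}\equiv 0$ and $\partial_3 h_{22} = h_{33}/(\b-\a)$. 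Hence $h_{33}$ depends on $x_3$ alone, and taking $\phi$ to be an antiderivative of $h_{33}$, I would recover functions $g_1,g_2$ with $h_{11}(x) = g_1'(x_1) - \phi'(x_3)/(\b-\a)$ and $h_{22}(x) = g_2'(x_2) + \phi'(x_3)/(\b-\a)$. Writing $\tilde S$ for the right-hand side of \eqref{eq:S} built from these $\phi,g_1,g_2$, a direct calculation confirms $\nabla\bar{\tilde S} = h\bar V$; thus $\nabla(\bar S - \bar{\tilde S})\equiv 0$, so $S - \tilde S$ depends only on $y$ almost everywhere in $x$ and is absorbed into $a(y)$. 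Strict consistency then forces each $G_{r,x_3}$ to be strictly increasing and $\phi$ to be strictly convex, since any flat piece would yield a non-unique minimiser of $\bar S(\cdot,F)$.

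\textbf{Main obstacle.}
The delicate step will be the pair $(1,3)$ (and its mirror $(2,3)$): the coupling $\partial_1\bar V_3 = -\bar V_1/(\b-\a)$ entangles $h_{11}$ with $h_{33}$ and is exactly what produces the characteristic cross-term $\pm x_i\phi'(x_3)/(\b-\a)$ in \eqref{eq:G1}--\eqref{eq:G2}. Disentangling this coupling rigorously requires the richness supplied by Assumption~(V\ref{ass:V4}) to vary the three components of $\bar V$ independently for each fixed $x$; without it the equations would only constrain linear combinations of the diagonal entries rather than determine them individually.
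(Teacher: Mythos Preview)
Your overall architecture matches the paper's proof: apply Osband's principle to obtain $\nabla\bar S = h\,\bar V$, kill the off-diagonal entries of $h$ via symmetry of second partials at $t=T(F)$, then extract the ODE-type relations among the diagonals via symmetry at general $x$, and finally integrate up and verify monotonicity/convexity. However, you have the roles of the assumptions transposed, and this matters for the argument to go through.

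The off-diagonal step uses the identities $h_{21}(t)f(t_1)=h_{12}(t)f(t_2)$ (and analogues) evaluated at $t=T(F)$. To conclude that the $h_{ij}$ vanish you must vary the density values $f(t_1),f(t_2)$ while keeping $t$ fixed; this is precisely what Assumption~(V\ref{ass:V4}) for $(V_1,V_2)^\intercal$ supplies, not~(F\ref{ass:F1}). Conversely, the relations $\partial_2 h_{11}\,\bar V_1=\partial_1 h_{22}\,\bar V_2$ and $(\partial_3 h_{11}+h_{33}/(\b-\a))\bar V_1=\partial_1 h_{33}\,\bar V_3$ hold at \emph{arbitrary} $x\in\interior(\A)$, so to separate the factors you need linear independence of the vectors $(\bar V_i(x,F_1),\bar V_j(x,F_1))$ and $(\bar V_i(x,F_2),\bar V_j(x,F_2))$ for suitable $F_1,F_2$. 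That independence is furnished by Assumption~(V\ref{ass:V1}), not~(V\ref{ass:V4}); the latter only speaks about derivatives of $\bar V$ at points where $\bar V$ itself vanishes. Assumption~(F\ref{ass:F1}) enters only at the very end, when one passes from the gradient identity $\nabla\bar S=\nabla\bar{\tilde S}$ to $S=\tilde S+a(y)$ \emph{almost everywhere}; the paper invokes Proposition~1 of \cite{Erratum} (together with local boundedness of $V$) for this step, which your proposal does not mention.

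Finally, your one-line argument that ``any flat piece would yield a non-unique minimiser'' is too quick for strict convexity of $\phi$ and strict monotonicity of $G_{r,x_3}$. The paper first uses the second-order necessary condition at $t=T(F)$ to get $c_1,c_2,c_3\ge0$, then a first-order argument along rays $s\mapsto\bar S(t+se_3,F)$ together with surjectivity of $T$ and a compactness argument to show that $c_3^{-1}(\{0\})$ (and the corresponding zero sets for $c_1,c_2$) are at most countable, hence Lebesgue null. Only this yields strict convexity/monotonicity in the almost-everywhere sense claimed.
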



\begin{proof}
First note that $V$ satisfies assumption (V\ref{ass:V3}) on $\F\subseteq \F_{\mathrm{cont}}$.
Let $F\in\F$ with derivative $f$ and let $x\in\interior(\A)$. Then one obtains
\[
\bar V_3(x,F) = x_3  + \frac{1}{\b - \a} \left(x_2(F(x_2) - \b) - x_1(F(x_1) - \a) - \int_{x_1}^{x_2} yf(y)\dint y\right)
\]
The partial derivatives of $V$ are given by
$\partial_1 \bar V_1(x,F) = f(x_1)$, 
$\partial_2 \bar V_2(x,F) = f(x_2)$,
$\partial_1 \bar V_3(x,F) =-(F(x_1) - \a)/(\b-\a)$,
$\partial_2 \bar V_3(x,F) =(F(x_2) - \b)/(\b-\a)$,
$\partial_3 \bar V_3(x,F) =1$,
and $\partial_r \bar V_1(x,F)$ and $\partial_m \bar V_2(x,F)$ vanish for $r\in\{2,3\}$ and $m\in\{1,3\}$.
Applying \citet[Theorem 3.2]{FisslerZiegel2016} yields the existence of continuously differentiable functions $h_{lm}\colon \interior(\A)\to\R$, $l,m\in\{1,2,3\}$, such that 
\(
\partial_m \bar S(x,F) = \sum_{i=1}^3 h_{mi}(x)\bar V_i(x,F)
\)
for $m\in\{1,2,3\}$.
Since we assume that $\bar S(\cdot, F)$ is twice continuously differentiable for any $F\in\F$, the second order partial derivatives need to commute. Let $t=T(F)$. Then $\partial_1\partial_2\bar S(t,F) = \partial_2 \partial_1 \bar S(t,F)$ is equivalent to
\(
h_{21}(t) f(t_1) = h_{12}(t) f(t_2).
\)
This needs to hold for all $F\in\F$. The variation in the densities implied by Assumption (V\ref{ass:V4}) in combination with the surjectivity of $T$ yield that $h_{12} \equiv h_{21} \equiv 0$ on $\interior(\A)$. Similarly, evaluating $\partial_1\partial_3 \bar S(x,F) = \partial_3\partial_1 \bar S(x,F)$ and $\partial_2\partial_3 \bar S(x,F) = \partial_3\partial_2 \bar S(x,F)$ at $x=t=T(F)$ yields
\(
h_{13}(t) = h_{31}(t)f(t_1), h_{23}(t) = h_{32}(t)f(t_2).
\) 
Using again Assumption (V\ref{ass:V4}) as well as the surjectivity of $T$, this implies that 
\(
h_{13} \equiv h_{31} \equiv h_{23} \equiv  h_{32} \equiv0.
\) 
So we are left with characterising $h_{mm}$ for $m\in\{1, 2,3\}$. Note that Assumption (V\ref{ass:V1}) implies that for any $x = (x_1,x_2,x_3)\in\interior(\A)$ there are two distributions $F_1,F_2\in\F$ such that $(F_1(x_1) - \a, F_1(x_2)  -\b)^\intercal$ and $(F_2(x_1) - \a, F_2(x_2)  -\b)^\intercal$ are linearly independent. Then, the requirement that 
\[
\partial_1\partial_2 \bar S(x,F) = \partial_1h_{22}(x)(F(x_2) - \b) = \partial_2 h_{11}(x)(F(x_1) - \a)  = \partial_2\partial_1\bar S(x,F)
\]
for all $x\in\interior(\A)$ and for all $F\in\F$ implies that $\partial_1h_{22} \equiv \partial_2 h_{11} \equiv 0$. 
Starting with $\partial_1\partial_3 \bar S(x,F) = \partial_3\partial_1 \bar S(x,F)$, implies that 
\(
\partial_1 h_{33} \bar V_3(x,F) = \big(\partial_3 h_{11}(x) + h_{33}(x)/(\b-\a)\big) \bar V_1(x,F).
\)
Again, Assumption (V\ref{ass:V1}) implies that there are $F_1,F_2\in\F$ such that $\big(\bar V_1(x,F_1), \bar V_3(x,F_1)\big)^\intercal$ and $\big(\bar V_1(x,F_2), \bar V_3(x,F_2)\big)^\intercal$ are linearly independent. Hence, we obtain that $\partial_1h_{33}\equiv0$ and $\partial_3 h_{11} \equiv - h_{33}/(\b-\a)$. With the same argumentation and starting from $\partial_2\partial_3 \bar S(x,F) = \partial_3\partial_2 \bar S(x,F)$ one can show that  $\partial_2h_{33}\equiv0$ and $\partial_3 h_{22} \equiv h_{33}/(\b-\a)$.
%
This means there exist functions 
$c_1 \colon \{(x_1,x_3)\in\R^2\,|\,\exists (z_1,z_2,z_3)\in \interior(\A), \ x_1=z_1, x_3 = z_3\}\to\R$, 
$c_2 \colon \{(x_2,x_3)\in\R^2\,|\,\exists (z_1,z_2,z_3)\in \interior(\A), \ x_2=z_2, x_3 = z_3\}\to\R$, 
$c_3 \colon 
\interior(\A)'_{3}\to\R$, 
and some $z\in \interior(\A)'_{3}$ such that for any $x = (x_1,x_2,x_3)\in\interior(\A)$ it holds that $h_{33}(x) = c_3(x_3)$,
\begin{align*}
h_{11}(x) &= c_{\min}(x_1,x_3) = - \frac{1}{\b - \a } \int_z^{x_3} c_3(z)\dint z + b_1(x_1),\\
h_{22}(x) &= c_{\max}(x_2,x_3) = \frac{1}{\b - \a} \int_z^{x_3} c_3(z) \dint z + b_2(x_2),
\end{align*}
where $b_r\colon \interior(\A)'_{r}\to\R$, $r\in\{1,2\}$. Due to the fact that any component of $T$ is mixture-continuous\footnote{For convex $\F$ a functional $T\colon \F\to\R^k$ is called mixture-continuous if for any $F,G\in\F$ the map $[0,1]\ni \lambda \mapsto T((1-\lambda)F + \lambda G)$ is continuous.} and since $\F$ is convex and $T$ surjective, the projection $ \interior(\A)'_{3}$ is an open interval. Hence, $[\min(z,x_3), \max(z,x_3)]\subset \interior(\A)'_{3}$.
Due to Assumptions (V\ref{ass:V3}) and (S\ref{ass:S2}), \citet[Theorem 3.2]{FisslerZiegel2016} implies that $c_1, c_2, c_3$ are locally Lipschitz continuous.

The above calculations imply that the Hessian of the expected score, $\nabla^2\bar S(x,F)$, at its minimiser $x=t = T(F)$,
is a diagonal matrix with entries  $c_1(t_1,t_3)f(t_1)$, $c_2(t_2,t_3)f(t_2)$, and $c_3(t_3)$.
As a second order condition $\nabla^2\bar S(t,F)$ must be 
positive semi-definite. Invoking the surjectivity of $T$ once again, this shows that $c_1,c_2, c_3\ge0$. More to the point, invoking the continuous differentiability of the expected score and the fact that $S$ is strictly $\F$-consistent for $T$ 
one obtains that 
for any $F\in\F$ with $t=T(F)$ and for any $v\in\R^3$, $v\neq 0$, there exists an $\eps>0$ such that
$\frac{\diff}{\diff s}\bar S(t+sv, F)$ is negative for all $s\in(-\eps,0)$, zero for $s=0$ and positive for all $s\in(\eps,0)$
For $v = e_3 = (0,0,1)^\intercal$, this means that for any $F\in\F$ with $t=T(F)$ there is an $\eps>0$ such that
$\frac{\diff}{\diff s}\bar S(t+se_3, F) = c_3(t_3+s)s  $
has the same sign as $s$ for all $s\in(-\eps,\eps)$.
Therefore, $c_3(t_3+s)>0$ for all $s\in(-\eps,\eps)\setminus\{0\}$. 
Using the surjectivity of $T$ and invoking a compactness argument, $c_3$ attains a 0 only finitely many times on any compact interval. 
Recall that $ \interior(\A)'_{3}$ is an open interval. Hence, it can be approximated by an increasing sequence of compact intervals. Therefore, $c_3^{-1}(\{0\})$ is at most countable and therefore a Lebesgue null set. With similar arguments one can show that for any $x_3\in  \interior(\A)'_{3}$, the sets $\{x_1\in\R\,|\,\exists (z_1,z_2,z_3)\in\interior(\A),\ x_1=z_1,\ x_3=z_3,\ c_1(x_1,x_3)=0\}$ and $\{x_2\in[x_3, \infty)\,|\, \exists (z_1,z_2,z_3)\in\interior(\A),\ x_2=z_2,\ x_3=z_3,\ c_2(x_2,x_3)=0\}$ are at most countable and therefore also Lebesgue null sets.

Finally, using Proposition 1 in \cite{Erratum} (recognising that $V$ is locally bounded) one obtains that $S$ is almost everywhere of the form \eqref{eq:S}. Moreover, it holds almost everywhere that $\phi''= c_3$ and $g_m' = b_m$ for $m\in\{1,2\}$. Hence, $\phi$ is strictly convex and the functions at \eqref{eq:G1} and \eqref{eq:G2} are strictly increasing.
\end{proof}

Combining Theorems \ref{thm:sufficiency} and \ref{thm:necessity}, one can show that the scoring functions given at \eqref{eq:S} are essentially the only strictly consistent scoring functions for the triplet $(\VaR_{\a}, \VaR_{\b}, \RVaR_{\a, \b})$ on the action domain $\A= \{(x_1,x_2,x_3)\in\R^3\,|\,c_{\min} \le  x_1\le x_3\le x_2 \le  c_{\max}\}$.

\begin{cor}\label{cor:char}
Let $\A= \{(x_1,x_2,x_3)\in\R^3\,|\,c_{\min} \le x_1\le x_3\le x_2 \le c_{\max}\}$ for some $-\infty\le c_{\min}<c_{\max}\le\infty$. Under the conditions of Theorem \ref{thm:necessity}, a scoring function $S\colon\A\times\R\to\R$ is strictly $\F$-consistent for $T = (\VaR_{\a}, \VaR_{\b}, \RVaR_{\a, \b})$, $0<\a<\b<1$, if and only if it is of the form \eqref{eq:S} almost everywhere
satisfying conditions \eqref{item:conv}, \eqref{item:incr}, \eqref{item:integr}.
Moreover, the function $\phi'\colon[c_{\min},c_{\max}]\to\R$ is necessarily bounded.
\end{cor}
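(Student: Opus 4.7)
The plan is to derive Corollary~\ref{cor:char} as a near-verbatim combination of Theorem~\ref{thm:sufficiency} (sufficiency) and Theorem~\ref{thm:necessity} (necessity), augmented by an argument for the boundedness of $\phi'$ modelled on Proposition~\ref{prop:simplecond}(i).

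The ``if'' direction is immediate from Theorem~\ref{thm:sufficiency}: on the cube $[c_{\min},c_{\max}]^3$ any $S$ of the form \eqref{eq:S} satisfying \eqref{item:conv}, \eqref{item:incr}, \eqref{item:integr} together with the strictness assumptions is strictly $\F$-consistent for $T$, and this passes to the subdomain $\A\subseteq[c_{\min},c_{\max}]^3$, noting that $\F\subseteq\F_{\mathrm{cont}}\subseteq\F^\a\cap\F^\b$. For the ``only if'' direction the hypotheses of the corollary coincide with those of Theorem~\ref{thm:necessity}, which produces the representation \eqref{eq:S} almost everywhere on $\A\times\R$ with $\phi$ strictly convex on $\A'_3=[c_{\min},c_{\max}]$ and each $G_{r,x_3}$ strictly increasing on its projected domain $\A'_{r,x_3}$. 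This supplies \eqref{item:conv} and \eqref{item:incr}, while \eqref{item:integr} is automatic from the $\F$-integrability of $S$.

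For the boundedness of $\phi'$ I adapt Proposition~\ref{prop:simplecond}(i) to the restricted domain $\A$. Fixing any anchors $x_1<x_1'$ and $x_2<x_2'$ in $[c_{\min},c_{\max}]$ (possible since $c_{\min}<c_{\max}$, and finite anchors may be chosen even if $c_{\min}=-\infty$ or $c_{\max}=\infty$), strict monotonicity of $G_{1,x_3}$ on $[c_{\min},x_3]$ gives
\[
\frac{\phi'(x_3)}{\b-\a}\le\frac{g_1(x_1')-g_1(x_1)}{x_1'-x_1}=:C_1\qquad\text{for all }x_3\ge x_1',
\]
and analogously $\phi'(x_3)/(\b-\a)\ge -C_2$ for all $x_3\le x_2$. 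The step that distinguishes this from Proposition~\ref{prop:simplecond}(i), and that I expect to be the main technical point, is the propagation of these one-sided bounds to the whole of $[c_{\min},c_{\max}]$: for this I exploit that $\phi'$, being a selection from the subdifferential of a convex function, is monotone non-decreasing, so the upper bound on $[x_1',c_{\max}]$ transfers to $[c_{\min},x_1']$ via $\phi'(x_3)\le\phi'(x_1')\le(\b-\a)C_1$, and symmetrically for the lower bound.
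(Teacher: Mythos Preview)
Your proposal contains a genuine gap in the ``only if'' direction. Theorem~\ref{thm:necessity} yields strict monotonicity of $G_{r,x_3}$ only on the projected domain $\A'_{r,x_3}$, which here equals $[c_{\min},x_3]$ for $r=1$ and $[x_3,c_{\max}]$ for $r=2$. Condition~\eqref{item:incr} of Theorem~\ref{thm:sufficiency}, however, requires (strict) monotonicity on all of $[c_{\min},c_{\max}]$. Your sentence ``This supplies \eqref{item:conv} and \eqref{item:incr}'' is therefore unjustified, and this extension is precisely what the paper isolates as the sole missing ingredient: for $x_3<x_1'$ one uses that $x_1,x_1'\in\A'_{1,x_1'}=[c_{\min},x_1']$, so $G_{1,x_1'}(x_1')\ge G_{1,x_1'}(x_1)$, and then monotonicity of $\phi'$ gives $G_{1,x_3}(x_1')-G_{1,x_3}(x_1)\ge G_{1,x_1'}(x_1')-G_{1,x_1'}(x_1)\ge 0$.

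The irony is that the tool you deploy for boundedness --- propagating one-sided inequalities via the monotonicity of $\phi'$ --- is exactly what is needed for this monotonicity extension; you simply applied it to the wrong target. Once \eqref{item:incr} holds on the full interval, boundedness of $\phi'$ follows immediately from Proposition~\ref{prop:simplecond}(i), so your separate boundedness argument, while correct, becomes redundant. Redirect your monotonicity-of-$\phi'$ trick to close the gap in \eqref{item:incr}, and the proof goes through.
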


\begin{proof} For the proof it suffices to show that for $r \in \{1,2\}$, $G_{r,x_3}$ defined in \eqref{eq:G1}, \eqref{eq:G2} is not only increasing on $\A_{r,x_3}'$ for any $x_3 \in \A_3'$ but on $\A_r' = [c_{\min},c_{\max}]$. For $x_3\in [c_{\min},c_{\max}] = \A_3'$, we have $\A_{1,x_3}' = [c_{\min},x_3]$ and $\A_{2,x_3}' = [x_3,c_{\max}]$. Let $x_3\in\A'_3$ and $x_1, x_1'\in\A'_1$ with $x_1<x_1'$. If $x_1, x_1'\in\A'_{1,x_3}$ there is nothing to show. If however $x_3<x_1'$, then $x_1,x'_1\in\A'_{1,x'_1}$. This means that 
\begin{align*}
0&\le g_1(x'_1) - g_1(x_1) - (x'_1 - x_1) \phi'(x_1')/(\b-\a)\\
&\le g_1(x'_1) - g_1(x_1) - (x'_1 - x_1) \phi'(x_3)/(\b-\a)\,
\end{align*}
where the second inequality stems from the fact that $\phi'$ is increasing.
If the function $G_{1,x'_1}$ is strictly increasing, then the first inequality is strict. The argument for $G_{2,x_3}$ works analogously.
\end{proof}

\begin{rem}\label{rem:difference of minima}
Note the structural difference of Theorems \ref{thm:sufficiency} and \ref{thm:necessity} to \citet[Theorem 1]{FrongilloKash2020}, \citet[Proposition 4.14]{Brehmer2017} and in particular \citet[Theorem 5.2 and Corollary 5.5]{FisslerZiegel2016}. Our functional of interest, $\RVaR_{\a,\b}$ with $0<\a<\b<1$, is not a minimum of an expected scoring function\,---\,or \emph{Bayes risk}\,---, but a difference of minima of two scoring functions. Indeed, while $\ES_\b(F) = -\frac{1}{\b}\bar S_\b(\VaR_\b(F),F)$, we have that
\[
\RVaR_{\a,\b}(F) = -\frac{1}{\b-\a}\big(\bar S_\b(\VaR_\b(F),F) - \bar S_\a(\VaR_\a(F),F)\big)\,.
\]
This structural difference is reflected in the minus sign appearing at \eqref{eq:G1}. In particular, it means that the functions $g_1$ and $g_2$ cannot identically vanish if we want to ensure strict consistency of $S$, whereas the corresponding functions in Theorem 5.2 in \cite{FisslerZiegel2016} may well be set to zero.
\citet[Theorem 2]{FrongilloKash2020} generalises our results and presents an elicitability result of any linear combination of Bayes risks.
\end{rem}

Concrete examples for choices of the functions $g_1$, $g_2$, and $\phi$ for the scoring function $S$ at \eqref{eq:S} are given and discussed in Section \ref{sec:sim}.

\section{Translation invariance and homogeneity}\label{sec:secondarycrit}

There are many choices for the functions $g_1$, $g_2$, and $\phi$ appearing in the formula for the scoring function $S$ at \eqref{eq:S}. Often, these choices can be limited by imposing secondary desirable criteria on $S$. In this section we show that, unfortunately, standard criteria (\cite{Patton2011, NoldeZiegel2017, FisslerZiegel2019}) such as translation invariance and positive homogeneity are not fruitful for RVaR.

If one is interested in scoring functions with an action domain of the form $\A= \{x\in\R^3\,|\,c_{\min} \le  x_1\le x_3\le x_2 \le c_{\max}\}$ possessing the additional property of translation invariant score differences, the only sensible choice is $c_{\min}=-\infty$, $c_{\max} = \infty$, amounting to the maximal action domain $\A_0$.
Similarly, for scoring functions with positively homogeneous score differences, the most interesting choices for action domains are $\A = \A_0$, $\A = \A_0^+ = \{(x_1,x_2,x_3) \in \R^3\,|\,0\le x_1\le x_3 \le x_2\}$ or $\A = \A_0^- = \{(x_1,x_2,x_3) \in \R^3\,|\,x_1\le x_3 \le x_2\le 0\}$.

\begin{prop}[Translation invariance]\label{prop: Translation invariance}
Under the conditions of Theorem \ref{thm:necessity} there are no strictly $\F$-consistent scoring functions for $(\VaR_\a,\VaR_\b, \RVaR_{\a,\b})$ on $\A_0$ with translation invariant score differences.
\end{prop}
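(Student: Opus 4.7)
The plan is to combine Corollary~\ref{cor:char} with a functional-equation argument isolating the $x_3$-component of the score. By Corollary~\ref{cor:char}, any strictly $\F$-consistent scoring function $S$ on $\A_0$ equals almost everywhere a scoring function of the form \eqref{eq:S} with $\phi$ strictly convex on the projection of $\A_0$ onto its third coordinate, which is all of $\R$, and with subgradient $\phi'$ bounded on $\R$. Following the usual convention of \cite{Patton2011, NoldeZiegel2017, FisslerZiegel2019}, translation invariance of score differences amounts to demanding that
\[
D(x,y,c) := S(x_1+c, x_2+c, x_3+c, y+c) - S(x_1, x_2, x_3, y)
\]
be independent of $x = (x_1,x_2,x_3)$ for every $y, c \in \R$. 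I would reduce to the form \eqref{eq:S} and then focus on the $x_3$-dependence of $D$ alone, which will be enough to force a contradiction.

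The key computation is $S_\gamma(x_i+c, y+c) = S_\gamma(x_i, y) - \gamma c$ for $\gamma \in \{\a, \b\}$: the $-c$ produced by $(S_\b - S_\a)/(\b-\a)$ cancels the extra $+c$ next to $x_3$ inside the $\phi'$-block, so the only $x_3$-dependent contribution to $D$ reduces to
\[
\bigl(\phi'(x_3+c) - \phi'(x_3)\bigr)(x_3 + R) - \bigl(\phi(x_3+c) - \phi(x_3)\bigr),
\]
where $R := (\b-\a)^{-1}\bigl(S_\b(x_2,y) - S_\a(x_1,y)\bigr)$ depends only on $(x_1,x_2,y)$. Fixing $c$, picking two distinct $x_3^{(1)}, x_3^{(2)}$ and choosing $x_1 \le \min(x_3^{(1)}, x_3^{(2)})$, $x_2 \ge \max(x_3^{(1)}, x_3^{(2)})$, the piecewise-linear formulas for $S_\a, S_\b$ show that varying $y \in (x_1, x_2)$ makes $R$ sweep a non-degenerate interval while $x_1, x_2$ stay fixed. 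Matching the expressions for $D$ at $x_3^{(1)}$ and $x_3^{(2)}$ yields an affine equation in $R$ which has to hold on this interval, so the $R$-coefficient must vanish, giving $\phi'(x+c) - \phi'(x) = h(c)$ for some function $h$ of $c$ alone.

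Setting $x = 0$ one sees that $g(x) := \phi'(x) - \phi'(0)$ satisfies Cauchy's equation $g(x+c) = g(x) + g(c)$. Convexity of $\phi$ makes $\phi'$ non-decreasing, hence $g$ monotone, which forces $g(x) = \lambda x$ for some $\lambda \ge 0$. Boundedness of $\phi'$ on the entire real line (from Corollary~\ref{cor:char}) then forces $\lambda = 0$, so $\phi'$ is constant and $\phi$ is affine, contradicting its strict convexity. The hard part will be ensuring that $R$ really traces an interval while $x_1, x_2$ stay fixed: a routine but careful case distinction on the relative position of $y$ and $x_1, x_2$. A minor bookkeeping point is that \eqref{eq:S} holds only almost everywhere, but since the functional equation concerns only the monotone function $\phi'$, it can be pinned down from its values on a dense set.
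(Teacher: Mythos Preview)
Your argument is correct and follows essentially the same strategy as the paper's: both isolate the $x_3$-dependence of the score difference, exploit the identity $S_\gamma(x_i+c,y+c)=S_\gamma(x_i,y)-\gamma c$ so that the argument $x_3+R$ of the $\phi'$-block is unchanged under translation, and conclude that $\phi'$ must be affine on $\R$, contradicting boundedness together with strict convexity.

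The only noteworthy difference is in how the affinity of $\phi'$ is extracted. The paper differentiates the translation-invariance identity in $x_3$ and reads off $\phi''(x_3+c)=\phi''(x_3)$ directly (using that the factor $x_3+R$ can be made nonzero). You instead take a finite difference in $x_3$, let $R$ range over an interval by varying $y\in(x_1,x_2)$, match the $R$-coefficient, and arrive at Cauchy's equation for $\phi'$, which monotonicity forces to be linear. Your route has the mild advantage of not invoking $\phi''$ explicitly, though under the assumptions of Theorem~\ref{thm:necessity} (in particular (S\ref{ass:S2})) the required second-order regularity of $\phi$ is available anyway, so the gain is mostly cosmetic. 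The almost-everywhere caveat you flag is handled the same way in both arguments: the functional equation is obtained on a full-measure set and the monotonicity of $\phi'$ pins it down everywhere.
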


\begin{proof}
Using Theorem \ref{thm:necessity} any strictly $\F$-consistent scoring function for $T$ must be of the form \eqref{eq:S} where in particular $\phi$ is strictly convex, twice differentiable, and $\phi'$ is bounded. Assume that $S$ has translation invariant score differences. That means that the function $\Psi\colon\R\times\A_0\times\A_0\times\R\to\R$,
\begin{align*}
\Psi(z,x,x',y) &= S(x_1 + z, x_2 + z,x_3+z,y+z) - S(x'_1+z,x'_2+z,x'_3+z,y+z) \\
&- S(x_1,x_2,x_3,y) + S(x'_1,x'_2,x'_3,y)
\end{align*}
vanishes. Then, for all $x\in\A_0$ and for all $z,y\in\R$
\[
0= \frac{\diff}{\diff x_3} \Psi(z,x,x',y) = \big(\phi''(x_3 + z) - \phi''(x_3)\big)\Big(x_3 + \frac{1}{\b-\a}\big(S_\b(x_2,y) - S_\a(x_1,y)\big)\Big)\,.
\]
Therefore, $\phi''$ needs to be constant. Since $\phi$ is convex and that means that $\phi'(x_3) =d x_3 + d'$ with $d>0$. But since $\A'_3 = \R$, $\phi'$ is unbounded, which is a contradiction.
\end{proof}
The proof of Proposition \ref{prop: Translation invariance} closely follows the one of Proposition 4.10 in \cite{FisslerZiegel2019}. The fact that the latter assertion entails a positive result has the following background: 
The strictly consistent scoring function for $(\VaR_\a, \ES_\a)$ given in \citet[Proposition 4.10]{FisslerZiegel2019} works only on a very restricted action domain. To guarantee strict consistency on such an action domain, one would need a refinement of Theorem \ref{thm:sufficiency} in the spirit of \citet[Proposition 2]{Erratum}. However, since such a positive result on a quite restricted action domain is practically irrelevant, we dispense with such a refinement and only state the relevant negative result here.

\begin{prop}[Homogeneity]\label{prop: Homogeneity}
Under the conditions of Theorem \ref{thm:necessity} there are no strictly $\F$-consistent scoring functions for $(\VaR_\a,\VaR_\b, \RVaR_{\a,\b})$ on $\A\in\{\A_0, \A_0^+, \A_0^-\}$ with positively homogeneous score differences.
\end{prop}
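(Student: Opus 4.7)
The plan is to mimic the structure of the proof of Proposition \ref{prop: Translation invariance}. By Theorem \ref{thm:necessity} together with Corollary \ref{cor:char}, any strictly $\F$-consistent scoring function $S$ for $T=(\VaR_\a,\VaR_\b,\RVaR_{\a,\b})$ on $\A\in\{\A_0,\A_0^+,\A_0^-\}$ must be of the form \eqref{eq:S} almost everywhere, with $\phi$ strictly convex, twice continuously differentiable on $\interior(\A'_3)$, and, crucially, with $\phi'$ bounded on $\A'_3\cap\R$. Under the assumption that $S$ has positively homogeneous score differences of some degree $b\in\R$, one has for all $c>0$
\[
S(cx_1,cx_2,cx_3,cy) - S(cx'_1,cx'_2,cx'_3,cy) = c^b\big(S(x,y) - S(x',y)\big),
\]
and the goal is to derive a contradiction with the boundedness of $\phi'$ via a scaling argument.

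First I would differentiate the above identity with respect to $x_3$. Since the first two lines of \eqref{eq:S} do not involve $x_3$, and since $S_\g(cz,cy) = c\,S_\g(z,y)$ for $c>0$ and $\g\in(0,1)$, this produces
\[
c^2\,\phi''(cx_3)\,M(x,y) \;=\; c^b\,\phi''(x_3)\,M(x,y), \qquad M(x,y) := x_3 + \tfrac{1}{\b-\a}\big(S_\b(x_2,y)-S_\a(x_1,y)\big).
\]
Choosing $y$ larger than both $x_1$ and $x_2$ gives $M(x,y) = x_3 + (\a x_1 - \b x_2)/(\b-\a)$, which can be made nonzero by varying $x_1,x_2$ inside the cone $\A$ for any fixed $x_3$, so $M$ is not identically zero and may be cancelled. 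The resulting functional equation
\[
\phi''(cx_3) = c^{b-2}\phi''(x_3), \qquad c>0,\; x_3\in\interior(\A'_3)\setminus\{0\},
\]
I would then analyse by cases on $b$. If $b=2$, then $\phi''$ is a positive constant (positive by strict convexity), so $\phi'$ is a non-constant affine function, hence unbounded on each of $\R$, $[0,\infty)$, $(-\infty,0]$, contradicting Corollary \ref{cor:char}. If $b\neq 2$, fixing $x_3=1$ (available for $\A_0$ and $\A_0^+$) yields $\phi''(c)=c^{b-2}\phi''(1)$ for all $c>0$, and integrating shows that $\phi'$ on $(0,\infty)$ is, up to an additive constant, a nonzero multiple of $x_3^{b-1}$ if $b\ne 1$ and of $\log x_3$ if $b=1$. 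In every subcase $\phi'$ is unbounded on $(0,\infty)\cap\A'_3$\,---\,at infinity if $b>1$, at zero if $b<1$, and at both endpoints if $b=1$. For $\A=\A_0^-$ the analogous argument starting from $x_3=-1$ produces unboundedness on $(-\infty,0)\cap\A'_3$.

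The main obstacle I anticipate is verifying that $M(x,y)$ is genuinely not identically zero so that the cancellation in the functional equation is legitimate; this requires carefully exploiting the freedom in $(x_1,x_2,y)$ allowed by the cone constraints of $\A$, rather than treating $x_3$ in isolation. Sorting through the exponent regimes $b>2$, $1<b<2$, $b=1$, $0<b<1$, $b\le 0$ is essentially bookkeeping once the scaling relation for $\phi''$ is established, and no ingredient beyond the template of Proposition \ref{prop: Translation invariance} and Corollary \ref{cor:char} should be required.
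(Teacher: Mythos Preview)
Your proposal is correct and follows essentially the same approach as the paper's proof: invoke the characterisation of Theorem \ref{thm:necessity} (together with Corollary \ref{cor:char}) to obtain the form \eqref{eq:S} with $\phi'$ bounded, differentiate the homogeneity identity in $x_3$ to deduce the scaling relation $\phi''(cx_3)=c^{b-2}\phi''(x_3)$, and then contradict boundedness of $\phi'$ by a case analysis on $b$. Your treatment is in fact slightly more detailed than the paper's, which only spells out the case $\A=\A_0^-$ and handles the dichotomy $b\ge 1$ versus $b\le 1$ without separating $b=2$ or $b=1$; your explicit verification that $M(x,y)$ is not identically zero is also a point the paper passes over silently.
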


\begin{proof}
Using Theorem \ref{thm:necessity} any strictly $\F$-consistent scoring function for $T$ must be of the form \eqref{eq:S} where in particular $\phi$ is strictly convex, twice differentiable, and $\phi'$ is bounded. Assume that $S$ has positively homogeneous score differences of some degree $b\in\R$. That means that the function $\Psi\colon (0,\infty)\times\A\times\A\times\R\to\R$,
\begin{align*}
\Psi(c,x,x',y) = S(cx,cy) - S(cx',cy) - c^bS(x,y) + c^bS(x',y) 
\end{align*}
vanishes. Therefore, for all $x\in\A$, for all $y\in\R$ and all $c>0$
\be{eq:hom}
0= \frac{\diff}{\diff x_3} \Psi(z,x,x',y) = \big(c^2\phi''(cx_3) - c^b\phi''(x_3)\big)\Big(x_3 + \frac{1}{\b-\a}\big(S_\b(x_2,y) - S_\a(x_1,y)\big)\Big)\,.
\ee
For the sake of brevity, we only consider the case $\A = \A_0^-$, the other cases being similar.
Equation \eqref{eq:hom} implies that that $\phi''(-x_3) = \phi''(-1)x_3^{b-2}$ for any $x_3 >0$. 
 Due to the strict convexity of $\phi$, we need that $\phi''(-1)>0$. However, for $b\ge 1$, $\inf_{x_3>0}\phi'(-x_3) = -\infty$ and for $b\le 1$, $\sup_{x_3>0}\phi'(-x_3) = \infty$. Hence, $\phi'$ cannot be bounded.
\end{proof}

\begin{rem}
The negative result of Proposition \ref{prop: Homogeneity} should be compared with the results of Theorem C.3 in \cite{NoldeZiegel2017} characterising homogeneous strictly consistent scoring functions for the pair $(\VaR_\b, \ES_\b)$. Since they use a different sign convention for $\VaR$ and $\ES$ than we do in this paper, their choice of the action domain $\R\times(0,\infty)$ corresponds to our choice $\A_0^-$. When interpreting $\RVaR_{\a,\b}$ as a risk measure, negative values of $\RVaR$ are the more interesting and relevant ones, using our sign convention. 
Inspecting the proof of Proposition \ref{prop: Homogeneity} and of  Proposition \ref{prop:simplecond}(i) one makes the following observation: For $b\ge1$, \cite{NoldeZiegel2017} state an impossibility result for their choice of action domain. In fact, the problem occurring in our context is that $\phi'$ is not bounded \textit{from below}. In Proposition \ref{prop:simplecond} this property is implied by the fact that the function $G_{2,x_3}$ at \eqref{eq:G2} is increasing. And it is exactly such a condition that is also present for strictly consistent scoring functions for the pair $(\VaR_\b, \ES_\b)$; see Theorem 5.2 in \cite{FisslerZiegel2016}. On the other hand, the complication for $b<1$ stems from the fact that $\phi'$ is not bounded \textit{from above}. This condition is related to the monotonicity of $G_{1,x_3}$ at \eqref{eq:G1}. Such a condition is not present for strictly consistent scoring functions for the pair $(\VaR_\b, \ES_\b)$. Correspondingly, there can be homogeneous and strictly consistent scoring functions for $b<1$ for this pair \citep{NoldeZiegel2017} while this is not possible for the triplet $(\VaR_\a,\VaR_\b, \RVaR_{\a,\b})$.
\end{rem}

\section{Mixture representation of scoring functions}\label{sec:mixture}

When forecasts are compared and ranked with respect to consistent scoring functions, one has to be aware that in the presence of non-nested information sets, model mis-specification and/or finite samples, the ranking may depend on the chosen consistent scoring function \citep{Patton2020}. In the specific case of $(\VaR_\a, \VaR_\b, \RVaR_{\a,\b})$, the forecast ranking may depend on the specific choice for the functions $g_1$, $g_2$, and $\phi$ appearing in Theorem \ref{thm:sufficiency}. A possible remedy to this problem is to compare forecasts simultaneously with respect to \emph{all} consistent scoring functions in terms of Murphy diagrams as introduced by \citet{EhmETAL2016}. Murphy diagrams are based on the fact that the class of all consistent scoring functions can be characterised as a class of mixtures of elementary scoring functions that depend on a low-dimensional parameter. The following theorem provides such a mixture representation for the scoring functions at \eqref{eq:S}. 
The applicability is illustrated in Section \ref{sec:sim}.
Recall that $S_\alpha(x,y) = (\one\{y \le x\}-\alpha)x - \one\{y \le x\}y$.

\begin{thm}\label{thm:mixture}
Let $0 < \alpha < \beta < 1$. 
Any scoring function $S:[c_{\min}, c_{\max}]^3 \times \R \to \R$ 
of the form \eqref{eq:S} with $a\colon \R \to \R$ chosen such that $S(y,y,y,y) = 0$ can be written as
\begin{equation}\label{eq:mixture}
S(x_1,x_2,x_3,y) = \int L^1_v(x_1,y) \dint H_1(v) + \int L^2_v(x_2,y) \dint H_2(v) + \int L^3_v(x_1,x_2,x_3,y)\dint H_3(v),
\end{equation}
where
\begin{align*}
L^1_v(x_1,y) &= (\one\{y \le x_1\} - \alpha)(\one\{v \le x_1\} - \one\{v \le y\})\\
L^2_v(x_2,y) &= (\one\{y \le x_2\} - \beta)(\one\{v \le x_2\} - \one\{v \le y\})\\
L^3_v(x_1,x_2,x_3,y) & = \frac{1}{\beta-\alpha}\Big(\one\{v > x_3\}(S_\alpha(x_1,y)+\alpha y) + \one\{v \le x_3\}(S_\beta(x_2,y)+\beta y)\Big)\\
&\quad+ (\one\{v \le x_3\}-\one\{v \le y\})v,
\end{align*}
and $H_1$, $H_2$ are locally finite measures on $[c_{\min}, c_{\max}]$ 
and $H_3$ is a finite measure on $[c_{\min}, c_{\max}]$. 
If $H_3$ puts positive mass on all open intervals, then $S$ is strictly consistent.
Conversely, for any choice of measures $H_1, H_2, H_3$ with the above restrictions, we obtain a scoring function of the form \eqref{eq:S}.
\end{thm}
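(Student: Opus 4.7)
My plan is to establish the representation \eqref{eq:mixture} by a Fubini argument: the three ingredients $g_1$, $g_2$, and $\phi'$ of the form \eqref{eq:S} are written as Lebesgue--Stieltjes integrals against measures $H_1$, $H_2$, and $H_3$ on $[c_{\min},c_{\max}]$, and the order of integration is then exchanged so that each slice becomes one of the elementary scores.

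As a first step, I would pin down $a$. Substituting the diagonal $(y,y,y,y)$ into \eqref{eq:S} and using the elementary identities $S_\alpha(y,y)=-\alpha y$ and $S_\beta(y,y)=-\beta y$ (which collapse the bracket multiplying $\phi'(x_3)$ to zero), the constraint $S(y,y,y,y)=0$ forces $a(y)=\alpha g_1(y)+\beta g_2(y)+\phi(y)$. Observe that $\alpha g_1(y)$ is precisely the excess needed to feed $\int L^1_v\dint H_1$ back into the $\VaR_\alpha$ block of \eqref{eq:S}, $\beta g_2(y)$ plays the analogous role for $\VaR_\beta$, and $\phi(y)$ is the piece absorbed by $\int L^3_v\dint H_3$.

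Next, I would exploit the identities $g_i(x)-g_i(y)=\int(\one\{v\le x\}-\one\{v\le y\})\dint H_i(v)$ for $i=1,2$, and for the third summand the pair $\phi'(x)-\phi'(c_{\min})=\int\one\{v\le x\}\dint H_3(v)$ together with the Bregman-type formula $\phi(x)=\phi(c_{\min})+\phi'(c_{\min})(x-c_{\min})+\int(x-v)_+\dint H_3(v)$. A direct expansion of the two quantile blocks of \eqref{eq:S} then yields $\int L^1_v\dint H_1-\alpha g_1(y)$ and $\int L^2_v\dint H_2-\beta g_2(y)$ respectively, the two excess $y$-only terms cancelling against the $\alpha g_1(y)+\beta g_2(y)$ piece of $a$. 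The remaining Bayes-risk-difference term $\phi'(x_3)(x_3+(S_\beta-S_\alpha)/(\beta-\alpha))-\phi(x_3)$, combined with $\phi(y)$, is rewritten via the two $H_3$-representations above; a Fubini swap identifies the integrand pointwise as the case split $[\one\{v>x_3\}(S_\alpha+\alpha y)+\one\{v\le x_3\}(S_\beta+\beta y)]/(\beta-\alpha)$ supplemented by the Bregman piece $(\one\{v\le x_3\}-\one\{v\le y\})v$, matching the formula for $L^3_v$ exactly.

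Strict consistency and the converse are then read off the same identities. If $H_3$ charges every open interval, the associated $\phi$ is strictly convex; combined with the Proposition \ref{prop:simplecond}(ii) normalisation this makes $G_{r,x_3}$ in \eqref{eq:G1}, \eqref{eq:G2} strictly increasing, so Theorem \ref{thm:sufficiency} delivers strict $\mathcal{F}$-consistency. For the converse, given arbitrary $H_1, H_2, H_3$ with the stated mass properties, set $g_i(x):=H_i([c_{\min},x])$, let $\phi$ be the convex primitive with $H_3$ as its second-derivative measure, and read the identities above in reverse. The main obstacle I anticipate is the bookkeeping of the boundary constants $\phi'(c_{\min})$ and $\phi(c_{\min})$: they generate linear-in-$x_i$ residuals which must be absorbed into $H_1$ and $H_2$ via exactly the shift invariance captured by Proposition \ref{prop:simplecond}(ii), and cleanly separating the genuinely $y$-only pieces (which belong to $a$) from those contained inside the $L^i_v$-integrands is the subtle combinatorial step that ties the argument together.
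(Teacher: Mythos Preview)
Your overall strategy---Fubini plus Lebesgue--Stieltjes representations of $g_1,g_2,\phi'$, together with the normalisation of Proposition~\ref{prop:simplecond}(ii)---is exactly the paper's route. Two points in the execution deserve correction, however.

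First, the step ``exploit the identities $g_i(x)-g_i(y)=\int(\one\{v\le x\}-\one\{v\le y\})\dint H_i(v)$'' presupposes that $g_1,g_2$ are increasing, which the form~\eqref{eq:S} does \emph{not} guarantee: only $G_{1,x_3}$ and $G_{2,x_3}$ are increasing, and if $\phi'$ keeps a constant sign one of the $g_i$ can fail to be monotone. You correctly name Proposition~\ref{prop:simplecond}(ii) as the remedy, but it must be applied \emph{before} writing down the $H_i$-representations, not afterwards as a device for absorbing leftover constants. The paper does precisely this: after the symmetric recentring $-\inf\phi'=\sup\phi'=\lambda(\beta-\alpha)$ one has $g_1'\ge\lambda$ and $g_2'\ge\lambda$, so the Stieltjes measures $\tilde H_1,\tilde H_2$ exist and even dominate $\lambda\mathcal L$; the Lebesgue parts are then peeled off and merged into the $L^3_v$-integral, which is how the boundary residuals are actually ``absorbed''.

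Second, anchoring $\phi'$ at $c_{\min}$ and writing $\phi(x)=\phi(c_{\min})+\phi'(c_{\min})(x-c_{\min})+\int(x-v)_+\dint H_3(v)$ breaks down when $c_{\min}=-\infty$, a case the theorem explicitly allows. The paper's centred representation $\phi'(x_3)=\int(\one\{v\le x_3\}-\tfrac12)\dint H_3(v)$ sidesteps this and, because the total mass of $H_3$ equals $2\lambda(\beta-\alpha)$, makes the residual accounting symmetric: the two Lebesgue pieces split cleanly between $H_1$ and $H_2$, yielding the stated elementary score $L^3_v$ directly.
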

\begin{proof}
An increasing function $h:[c_{\min}, c_{\max}] \to \R$ can always be written as
\begin{equation}\label{eq:incmeasure}
h(x) = \int(\one\{v \le x\} - \one\{v \le z\})\dint H(v) + C, \quad x \in [c_{\min}, c_{\max}],
\end{equation}
for some locally finite measure $H$, and some $z \in [c_{\min}, c_{\max}]$, $C \in \R$. 
The function $h$ is strictly increasing if and only if $H$ is strictly positive, i.e., it puts positive mass on all open non-empty intervals. Furthermore, the one-sided derivatives of $h$ are bounded below by $\lambda>0$ if and only if $H(A) \ge \lambda\mathcal{L}(A)$ for all Borel sets $A\subseteq  [c_{\min}, c_{\max}]$, where $\mathcal{L}$ is the Lebesgue measure on $\R$. 

Using the arguments from Proposition \ref{prop:simplecond}, it is no loss of generality to show the assertion for a score $S$ such that $\lambda(\b - \a) = -\inf_x \phi'(x) = \sup_x \phi'(x)$ and the one-sided derivatives of $g_1$, $g_2$ are bounded from below by $\lambda>0$.

Then, there is a measure $H_3$ on $[c_{\min}, c_{\max}]$ such that $H_3([c_{\min}, c_{\max}]) = 2\lambda (\b-\a)$, which is strictly positive if and only if $\phi$ is strictly convex, such that for all 
for all $x_3 \in [c_{\min},c_{\max}]$, we have
\begin{align*}
\phi'(x_3) &= \int\one\{v \le x_3\}\dint H_3(v) - \lambda(\beta-\alpha) = \int\left(\one\{v \le x_3\} - \frac{1}{2}\right)\dint H_3(v).
\end{align*}
Using Fubini's theorem, we find that
\begin{align*}
&\phi(x_3) - \phi(y) = \int (\one\{w \le x_3\}-\one\{w \le y\})\phi'(w)\, \diff w \\
&= \int (\one\{w \le x_3\}-\one\{w \le y\})\int\left(\one\{v \le w\} - \frac{1}{2}\right)\dint H_3(v)\, \diff w\\
&= \int \int (\one\{w \le x_3\}-\one\{w \le y\})\one\{v \le w\}\diff w \, \diff H_3(v) - \int\frac{1}{2}(x_3 - y)\dint H_3(v)\\
&= \int \one\{v \le x_3\}(x_3 - v) - \one\{v \le y\}(y-v) - \frac{1}{2}(x_3 - y)\dint H_3(v).
\end{align*}
Using \eqref{eq:S}, \eqref{eq:incmeasure} and Proposition \ref{prop:simplecond} it is straight forward to check that a scoring function of the form \eqref{eq:S} can be written as in \eqref{eq:mixture} with $L_v^3$ replaced by 
\begin{align*}
\tilde L^3_v(x_1,x_2,x_3,y) & = \left(\one\{v \le x_3\}-\frac{1}{2}\right)\left(x_3 + \frac{1}{\beta-\alpha}(S_\beta(x_2,y)-S_\alpha(x_1,y))\right)\\
&\quad - \frac{1}{2}|x_3 - v| + \frac{1}{2}|y-v|,
\end{align*}
and locally finite measures $\tilde H_1$, $\tilde H_2$ on $[c_{\min},c_{\max}]$ instead of $H_1$, $H_2$ such that $\tilde H_i(A) \ge \lambda\mathcal{L}(A)$ for $i=1,2$, and for all Borel sets $A \subseteq \R$, and the measure measure $H_3$. 
We can write $\tilde H_i = H_i + \lambda \mathcal{L}$, $i=1,2$, for some locally finite measures $H_i$, $i=1,2$. Integrating $v \mapsto L_v^1$ with respect to $\lambda \mathcal{L}$, we obtain the function $\lambda(S_\alpha(x_1,y) + \alpha y)$, and analogously for $L_v^2$. Using that $H_3([c_{\min},c_{\max}]) = 2\lambda(\beta-\alpha)$ yields the claim with 
\begin{align*}
L^3_v(x_1,x_2,x_3,y) & = \frac{1}{2(\beta-\alpha)}(S_\beta(x_2,y) + \beta y + S_\alpha(x_1,y) + \alpha y)\\
&\quad + \left(\one\{v \le x_3\}-\frac{1}{2}\right)\left(x_3 + \frac{1}{\beta-\alpha}(S_\beta(x_2,y)-S_\alpha(x_1,y))\right)\\
&\quad - \frac{1}{2}|x_3 - v| + \frac{1}{2}|y-v|,
\end{align*}
which is equal to the formula given in the statement of the theorem. The scoring functions $L_v^1$ and $L_v^2$ are consistent for VaR at level $\alpha$ and $\beta$, respectively. 
The scoring function $L_v^3$ is of the form \eqref{eq:S} with $g_1(x) = g_2(x) = x/(2\b - 2\a)$ and $\phi(x) = |x-v|/2$, which renders it a consistent scoring function for $(\VaR_\a, \VaR_\b, \RVaR_{\a,\b})$.
The converse statement follows by direct computations.
\end{proof}

\section{Simulations}\label{sec:sim}

\begin{table}[t]
\begin{center}
\begin{tabular}{|c||c|}
\hline
Scoring function & $\phi'(x_3)$\\\hline\hline
$S_1$ &  $(\beta-\alpha)\tanh((\beta-\alpha)x_3)$\\\hline
$S_2$ &  $(\beta-\alpha)(2/\pi)\arctan((\beta-\alpha)x_3)$\\\hline
$S_3$ &  $(\beta-\alpha)(2\Phi((\beta-\alpha)x_3)-1)$\\\hline
$S_4$ &  $(\beta-\alpha)(-\one\{x_3 < c_1\} + \one\{x_3 > c_2\}$\\
& $+ \one\{c_1 \le x_3 \le c_2\}2(x_3-(c_1+c_2)/2)/(c_2 - c_1))$\\\hline
\end{tabular}
\caption{Examples of scoring functions. In all cases we choose $g_1(x_1) = x_1$ and $g_2(x_2) = x_2$. The parameters $c_1, c_2 \in \R$ satisfy $c_1 < c_2$.
\label{tab:1}}
\end{center}
\end{table}

This simulation study illustrates the usage of consistent scoring functions for the triplet $(\VaR_\a, \VaR_\b, \RVaR_{\a,\b})$ when comparing the predictive performances of different forecasts for this triplet, e.g., in the context of comparative backtests \citep{NoldeZiegel2017}.
Due to the negative results in Section \ref{sec:secondarycrit} it is challenging to suggest concrete examples for the choices of the functions $\phi$, $g_1$ and $g_2$ in \eqref{eq:S}. In Table \ref{tab:1}, we give some first suggestions. The scoring function $S_4$ is in the spirit of the Huber loss \cite[p.\ 79]{Huber1964}. It is only strictly consistent on $[c_1,c_2]^3$,
but remains consistent for all of $\R^3$.
We illustrate the discrimination ability of the suggested scoring functions with a slightly extended version of a simulation example of \citet{GneitingETAL2007} which has also been considered in \citet{FisslerETAL2016}.

\begin{figure}[t]
\includegraphics[width=0.32\textwidth]{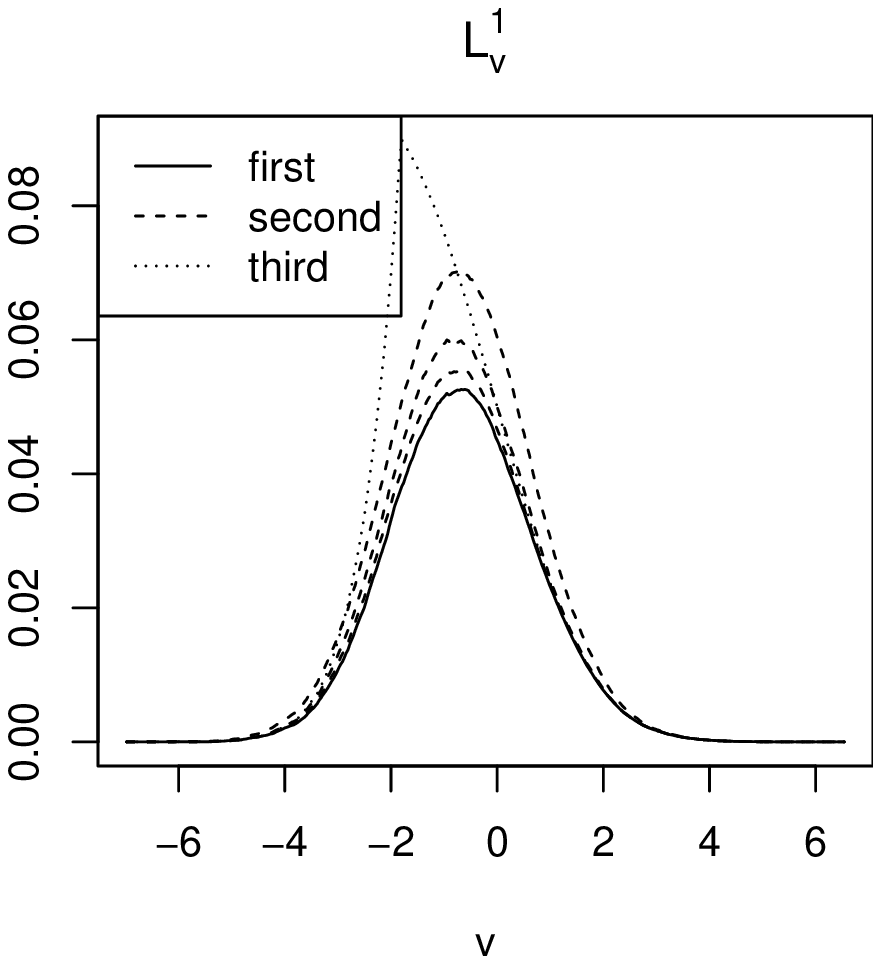}
\includegraphics[width=0.32\textwidth]{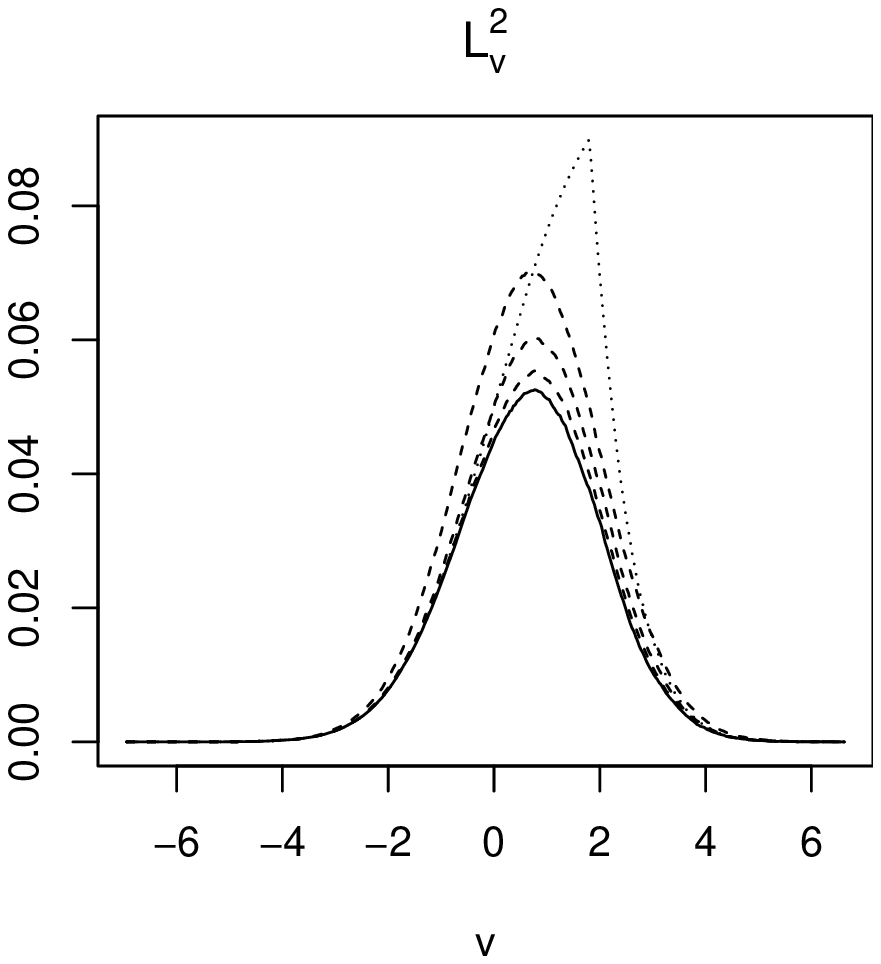}
\includegraphics[width=0.32\textwidth]{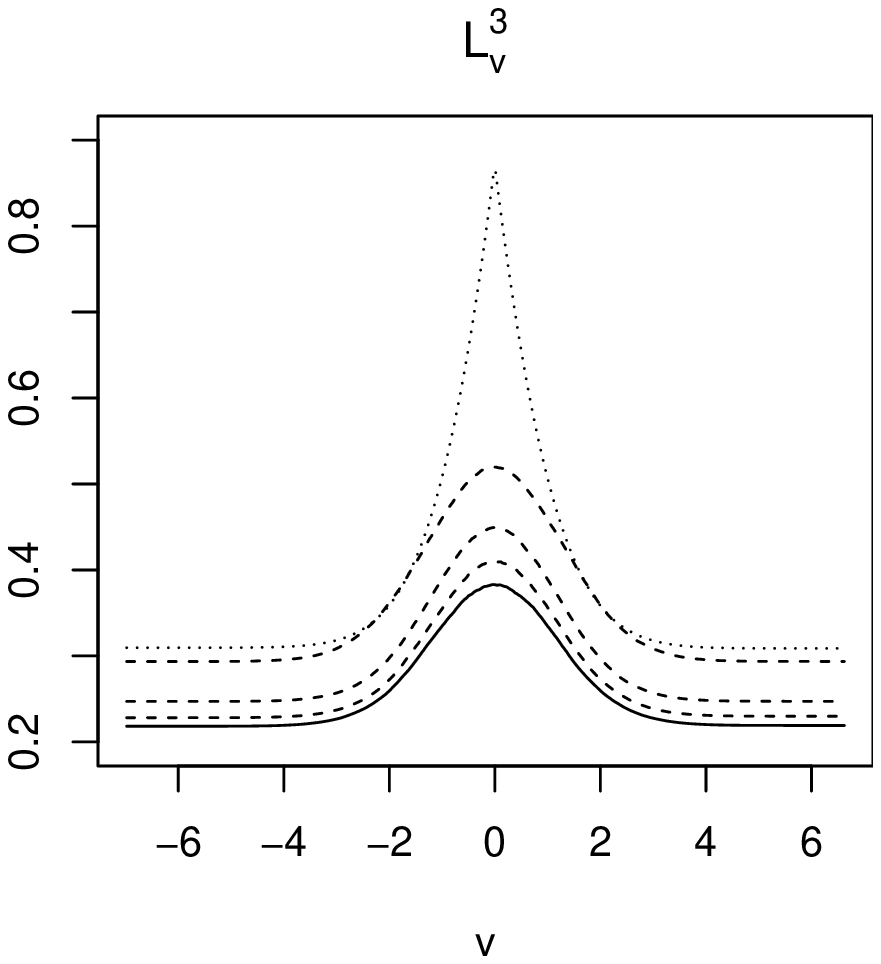}
\caption{Murphy diagrams for $\alpha=1-\beta=0.1$. Plots of 
expected elementary scores $L_v^1$, $L_v^2$, $L_v^3$ in terms of $v$ for the three forecasters described in the text. For the second forecaster, the curves correspond to $\sigma = 0.3,0.5,0.8$ from bottom to top.\label{fig:1}}
\end{figure}

\begin{figure}[t]
\includegraphics[width=0.32\textwidth]{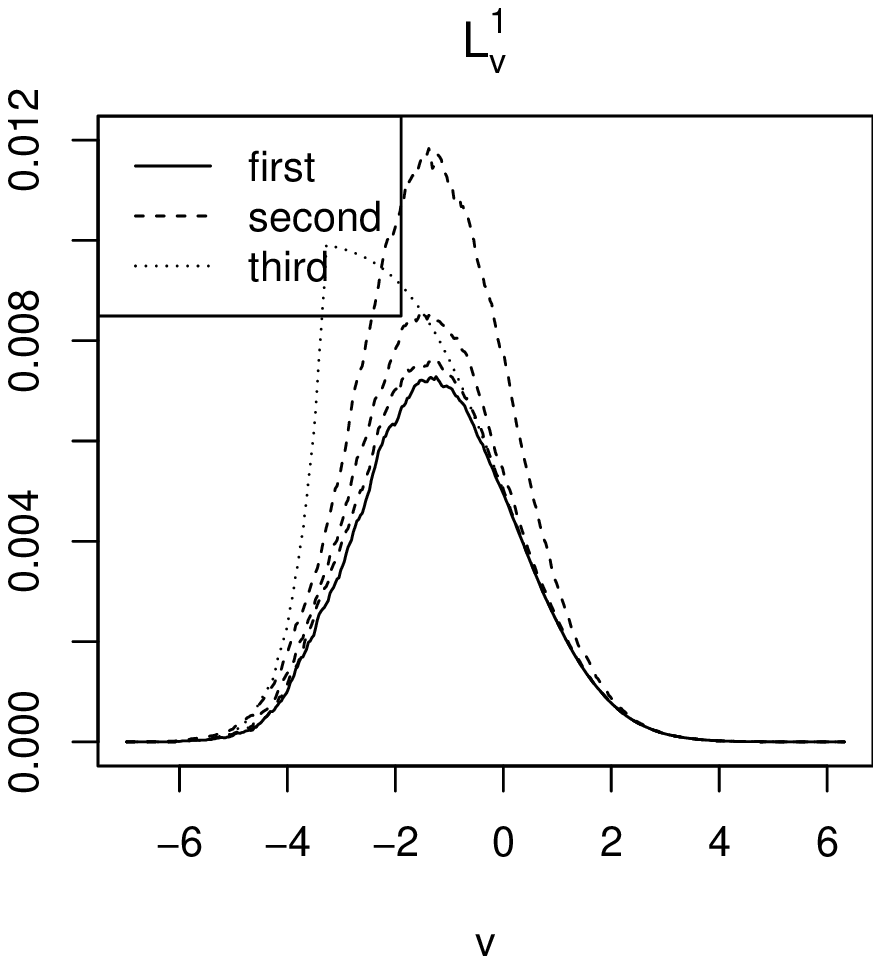}
\includegraphics[width=0.32\textwidth]{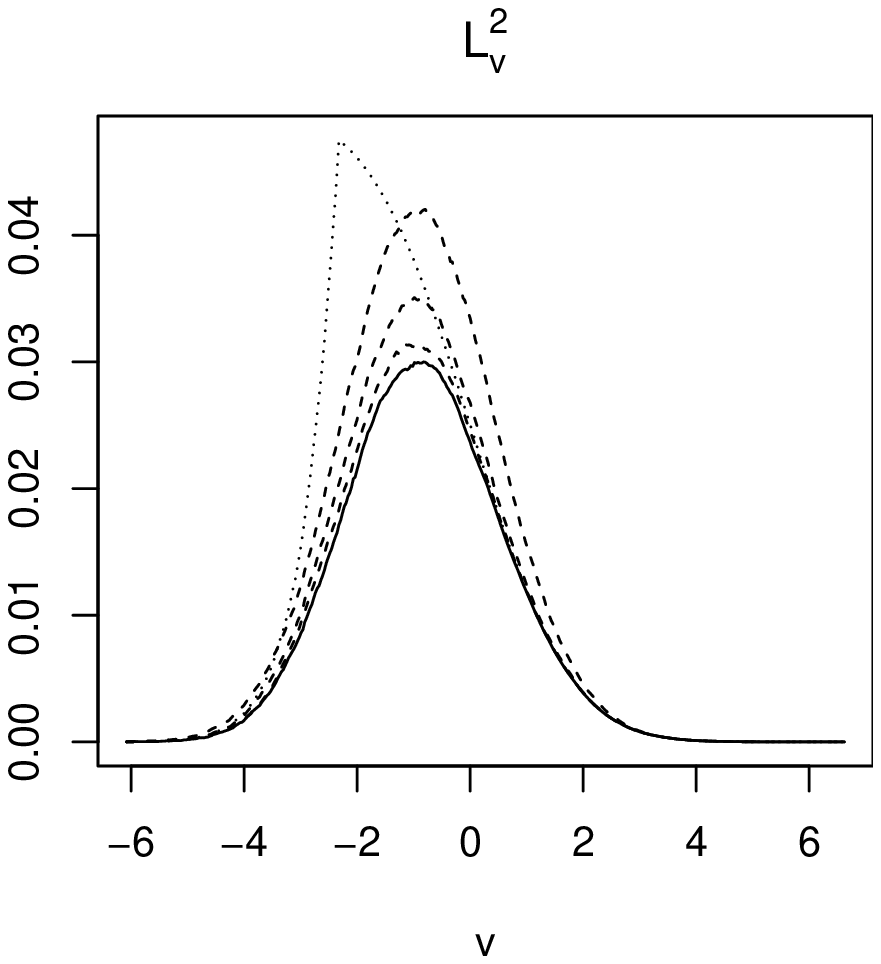}
\includegraphics[width=0.32\textwidth]{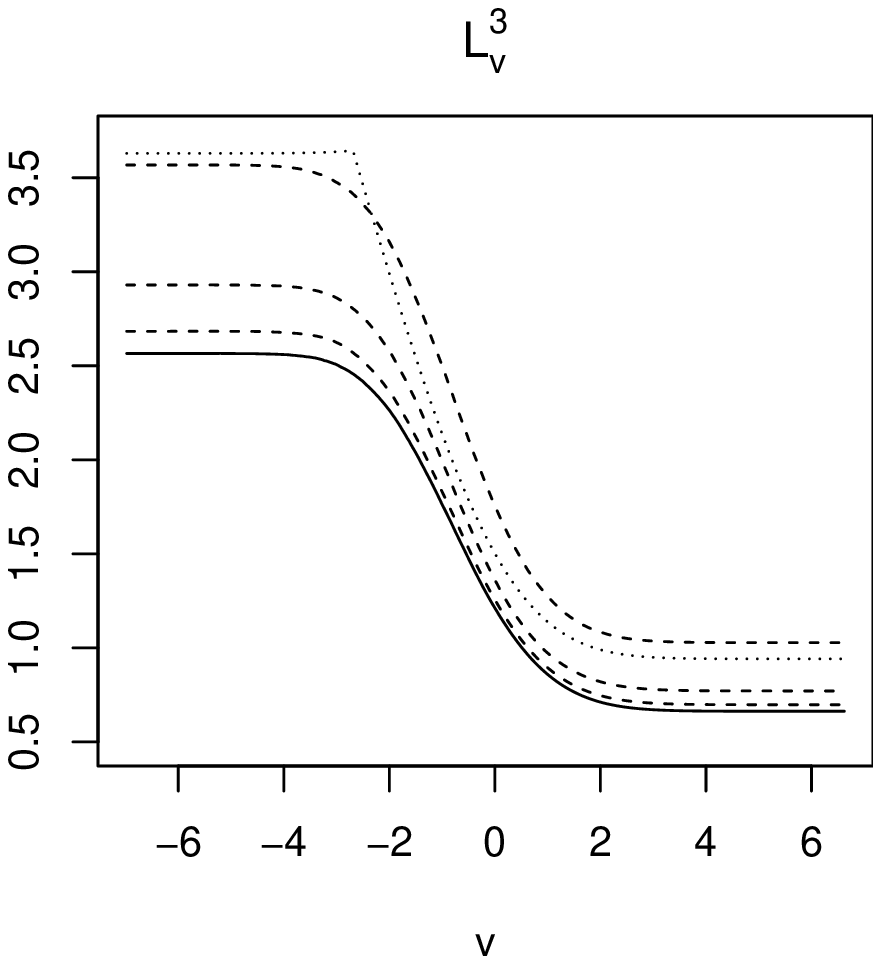}
\caption{Murphy diagrams for $\alpha=0.01$, $\beta=0.05$. Plots of expected elementary scores $L_v^1$, $L_v^2$, $L_v^3$ in terms of $v$ for the three forecasters described in the text. For the second forecaster, the curves correspond to $\sigma = 0.3,0.5,0.8$ from bottom to top.\label{fig:2}}
\end{figure}

We consider a data generating process $(Y_t)_{t=1,\dots,N}$ given by $Y_t = \mu_t + u_t$, where $(\mu_t)_{t=1,\dots,N}$ and $(u_t)_{t=1,\dots,N}$ are mutually independent sequences of i.i.d.\ standard normal random variables.
Suppose we have three different forecasters who provide point forecasts, aiming at correctly specifying $(\VaR_\a, \VaR_\b, \RVaR_{\a,\b})$ of the (conditional) distribution of $Y_t$.
The first forecaster has access to $\mu_t$ and uses the correct conditional distribution for prediction, that is, they predict 
\[
f_t = (f_{1,t},f_{2,t},f_{3,t}) = \left(\mu_t + \Phi^{-1}(\alpha), \mu_t + \Phi^{-1}(\beta), \mu_t - \frac{1}{\beta-\alpha}\left(\varphi(\Phi^{-1}(\beta))-\varphi(\Phi^{-1}(\alpha))\right)\right)
\]
for timepoint $t$, where $\varphi$ and $\Phi$ denote the density and quantile function of the standard normal distribution, respectively. The second forecaster predicts $g_t = (g_{1,t},g_{2,t},g_{3,t})$, where $g_{1,t}=f_{1,t}+\eps_t$, $g_{2,t}=f_{2,t}+\eps_t$ and $g_{3,t}=f_{3,t}+\eps_t$ and where $(\eps_t)_{t=1,\dots,N}$ is independent normally distributed noise with mean zero and variance $\sigma^2$. The third forecaster, $h_t = (h_{1,t},h_{2,t},h_{3,t})$, bases their predictions on the unconditional distribution of $Y_t$, that is $\mathcal{N}(0,2)$. Therefore, the forecasts take the form
\[
h_t = \left(\sqrt{2}\Phi^{-1}(\alpha),\sqrt{2}\Phi^{-1}(\beta),-\frac{\sqrt{2}}{\beta-\alpha}\left(\varphi(\Phi^{-1}(\beta))-\varphi(\Phi^{-1}(\alpha))\right)\right).
\]

It is clear that the first forecaster dominates the second and the third forecaster, that is, they will be preferred under any consistent scoring function. 
Indeed, invoking \cite{HolzmannEulert2014}, in case of the first and the second forecaster, the first one is ideal with respect to the information set $\sigma(\mu_t,\eps_t)$, whereas the second one is based on the same information set but is not ideal. In case of the first and the third forecaster, both forecasters are ideal but the information set of the first forecaster, $\sigma(\mu_t)$, is larger than the one of the third forecaster, which is the trivial $\sigma$-algebra. It will depend on the size of the variance $\sigma^2$ whether the second or the third forecaster is preferred. Figures \ref{fig:1} and \ref{fig:2} provide Murphy diagrams of all forecasters computed from a sample of size $N=100'000$, providing a good approximation of the population level. 
They are in line with our theoretical considerations above concerning the ranking of the three forecasts.

We compare the predictive performances using Diebold-Mariano tests \citep{DieboldMariano1995} based on the scoring functions in Table \ref{tab:1}. 
We consider samples of size $N=250$ and repeat our experiment 10'000 times. In the left panel of Table \ref{tab:2}, we consider the case that $\alpha = 1-\beta = 0.1$ where $\RVaR_{\a,\b}$ is a trimmed mean. We report the ratio of rejections of the null hypothesis that forecaster $i$ outperforms forecaster $j$, $i,j\in\{1,2,3\}$, $i\neq j$, evaluated in terms of the score $S$ at significance level $0.05$. E.g., for $i=1, j=2$, we consider the null hypothesis $\E[S(f_t,Y_t)]\le \E[S(g_t,Y_t)]$ for all $t=1, \ldots, N$, or in short, $f\preceq g$.
Analogously, in the right panel of Table \ref{tab:2}, we consider the case that $\alpha,\beta$ are both close to zero, that is, $\alpha=0.01$ and $\beta=0.05$, which is a setting that is relevant if $\RVaR_{\a,\b}$ is used as a risk measure. For the scoring function $S_4$, we have experimented a bit with the values $c_1$ and $c_2$ and report the results for the choices that worked best in our experiments. A systematic study on how to choose these two parameters goes beyond the scope of the present paper. 

\begin{table}[t]
\begin{center}
\begin{tabular}{|c||c|c|c|c|}
\hline
$H_0$ & $S_1$ & $S_2$ & $S_3$ & $S_4$\\\hline\hline
$f \preceq g$ & 0 & 0 & 0 & 0\\\hline
$g \preceq f$ & 0.304 & 0.406 & 0.417 & 0.624\\\hline
$f \preceq h$ & 0 & 0 & 0 & 0\\\hline
$h \preceq f$ & 1.000 & 1.000 & 1.000 & 1.000\\\hline
$g \preceq h$ & 0 & 0 & 0 & 0\\\hline
$h \preceq g$ & 0.999 & 0.998 & 0.992 & 0.998\\\hline
\end{tabular}
\qquad
\begin{tabular}{|c||c|c|c|c|}
\hline
$H_0$ & $S_1$ & $S_2$ & $S_3$ & $S_4$\\\hline\hline
$f \preceq g$ & 0 & 0 & 0 & 0.003\\\hline
$g \preceq f$ & 0.515 & 0.529 & 0.500 & 0.566\\\hline
$f \preceq h$ & 0 & 0 & 0 & 0\\\hline
$h \preceq f$ & 0.995 & 1.000 & 0.996 & 0.835\\\hline
$g \preceq h$ & 0.001 & 0 & 0 & 0\\\hline
$h \preceq g$ & 0.874 & 0.993 & 0.885 & 0.393\\\hline
\end{tabular}
\caption{Power of Diebold-Mariano tests at significance level $0.05$ for the scoring functions in Table \ref{tab:1} in the case that $\alpha = 1-\beta = 0.1$ (left panel), and $\alpha=0.01$, $\beta=0.05$ (right panel). In the first case we chose $-c_1 = c_2 = 12$ for the scoring function $S_4$, and $c_1 = -5$, $c_2 = 1$ in the second case. The null hypothesis $f \preceq g$ means that $\E [S(f_t,Y_t)] \le \E [S(g_t,Y_t)]$ for all $t=1, \ldots, N$ for the scoring function specified in the column label. We chose $\sigma^2 = 0.5^2$ for the forecaster $g$.\label{tab:2}}
\end{center}
\end{table}

For the situation of the left panel of Table \ref{tab:2} concerning $\a = 1-\b=0.1$, we can see that forecaster 1 (2) outperforms forecaster 3 with a power of 1 (almost 1) for all scoring functions used. For a comparison of forecaster 1 and forecaster 2, the situation is more interesting: While forecaster 1 outperforms forecaster 2 with regard to all scoring functions considered, the power of the tests (and the associated discrimination ability of the scoring functions) varies substantially. While $S_1$ leads to an empirical power of 0.304 for the null hypothesis $f \preceq g$, the score $S_4$ induces a power of 0.624 for the same null hypothesis.
The situation described in the right panel of Table \ref{tab:2} considering the parameter choice $\a=0.01$ and $\b=0.05$ leads to a different situation. The tests employing $S_1$, $S_2$ and $S_3$ have a similar power. In contrast, $S_4$ yields a considerably smaller power (0.393) for the null $h \preceq g$ than the other scores ($\ge 0.874$ for all cases). A more detailed study and comparison of other scoring functions and other situations is deferred to future work.

\section{Implications for regression}\label{sec:discussion}

After illustrating the usage of consistent scoring functions in forecast comparison and comparative backtesting in Section \ref{sec:sim}, 
we would like to outline how one can implement our results about the elicitability of the triplet $(\VaR_\a, \VaR_\b, \RVaR_{\a,\b})$, $0<\a<\b<1$ in a regression context. Then we would like to contrast our ansatz to other suggestions for regression of the $\a$-trimmed mean (which can be generalised to $\RVaR_{\a,\b}$). The most common alternative approaches in the literature on robust statistics are the \emph{trimmed least squares approach} and a two-step estimation procedure using the \emph{Huber skipped mean}.

\subsection{A joint regression framework for $(\VaR_\a, \VaR_\b, \RVaR_{\a,\b})$}\label{subsec:joint regression}

Let $(X_t,Y_t)_{t\in\mathbb N}$ be a time series with the usual notation that $Y_t$ denotes some real valued response variable and $X_t$ is a $d$-dimensional vector of regressors. Let $\Theta\subseteq \R^k$ be some parameter space and $M\colon \R^d\times \Theta \to \R^3$
a parametric model for $T = (\VaR_\a, \VaR_\b, \RVaR_{\a,\b})$, $0<\a<\b<1$.
We assume a correct model specification, that is, we assume that 
there is a unique $\theta_0\in\Theta$ such that 
\be{eq:model spec}
T(F_{Y_t|X_t}) = M(X_t, \theta_0) \quad \P\text{-a.s.} \ \text{for all } t\in\mathbb N,
\ee
where $F_{Y_t|X_t}$ denotes the conditional distribution of $Y_t$ given $X_t$. That means, $M(X_t, \theta_0)$ models jointly the conditional $\VaR_\a$, $\VaR_\b$ and the conditional $\RVaR_{\a,\b}$. Let $S$ be a strictly consistent scoring function of the form \eqref{eq:S} and suppose  the sequence $(X_t,Y_t)_{t\in\mathbb N}$ satisfies certain mixing conditions \cite[Corollary 3.48]{White2001} (in particular under independence). Then one obtains under additional moment conditions that, as $n\to\infty$,
\[
\frac{1}{n} \sum_{t=1}^n S(M(X_t,\theta), Y_t) - \frac{1}{n} \sum_{t=1}^n  \E\big[S(M(X_t,\theta), Y_t)\big] \to 0 \quad \P\text{-a.s.} 
\]
It is essentially this Law of Large Numbers result which allows for consistent parameter estimation with the empirical $M$-estimator $\widehat \theta_n = \argmin_{\theta_\in \Theta} n^{-1} \sum_{t=1}^n S(M(X_t,\theta), Y_t)$; see e.g.\ \cite{VanderVaart1998}, \cite{HuberRonchetti2009}, \cite{NoldeZiegel2017} and \cite{DFZ2020} for details. 

In summary, we can see that the complication of this procedure is that one needs to model the components $\VaR_\a$, $\VaR_\b$, even if one is only interested in $\RVaR_{\a,\b}$. The advantage is that one can substantially deviate from an i.i.d.\ assumption on the data generating process. One can deal with serially dependent, though mixing, and non-stationary data. One only needs the \emph{semiparametric stationarity} specified through \eqref{eq:model spec}.

\subsection{Trimmed least squares}

Most proposals for $M$-estimation and regression for $\RVaR_{\a,\b}$ in the field of robust statistics focus on the $\a$-trimmed mean, $\a\in(0,1/2)$, corresponding to $\RVaR_{\a,1-\a}$. But they can often be extended to the general case $0<\a<\b<1$ in a straightforward way. When this is the case, we describe the procedure in this more general manner. A majority of the proposals in the literature are commonly referred to as a \emph{trimmed least squares} (TLS) approach. However, strictly speaking, TLS actually subsumes different, though closely related estimation procedures.

 The first one was coined by \cite{KoenkerBasset1978}\,---\,cf.\ \cite{RuppertCarroll1980}\,---\,and constitutes a two-step $M$-estimator: In a first step, the $\a$- and $\b$-quantile are determined via usual $M$-estimation. Then, all values below the former and above the latter are omitted and $\RVaR_{\a,\b}$ is computed with an ordinary least squares approach. One can also express this procedure using order-statistics. Using the notation from Subsection \ref{subsec:joint regression}, an $M$-estimator for $\RVaR_{\a,\b}$ is given by
 \(
 \argmin_{z\in\R} \frac{1}{n} \sum_{i= [n\a]}^{[n\b]} (z - Y_{(i)})^2.
 \)
 Here, $Y_{(1)} \le \cdots \le Y_{(n)}$ is the order-statistics of the sample $Y_1, \ldots, Y_n$. While this procedure seems to work for a simplistic regression model (ignoring the regressors $X_t$ and only modelling the intercept part), it is not clear how to use it in a more interesting regression context, where one is actually interested in the \emph{conditional} distribution of $Y_t$ given $X_t$ rather than the unconditional distribution of $Y_t$. Moreover, since this approach uses the order statistics of the entire sample $Y_1, \ldots, Y_n$ to implicitly estimate the $\a$- and $\b$-quantile, it requires that these quantiles be constant in time. Hence, heteroscedasticity (in time) can lead to problems, even if $\RVaR_{\a,\b}$ is constant in time.
  
A second approach is described, for example, in \cite{Rousseeuw1984, Rousseeuw1985} and relies on order-statistics of the \emph{squared residuals}. It only seems to work for the $\a$-trimmed mean. To be more precise, and again using the notation from above, let $m\colon \R^d \times \Theta \to\R$ be a one-dimensional parametric model. Again, one assumes that there is a unique correctly specified model parameter $\theta_0\in\Theta$ such that 
\be{eq:model spec2}
\RVaR_{\a,1-\a}(F_{Y_t|X_t}) = m(X_t, \theta_0) \quad \P\text{-a.s.} \ \text{for all } t\in\mathbb N.
\ee
For each $\theta\in\Theta$, define the residuals $\eps_t(\theta) := Y_t - m(X_t,\theta)$ and the absolute residuals $r_t(\theta) := |\eps_t(\theta)|$. Define the order-statistics of the absolute residuals $0\le r_{(1)}(\theta) \le \cdots \le r_{(n)}(\theta)$ for a sample of size $n$. Then an $M$-estimator is defined via
\[
\widehat \theta_n = \argmin_{\theta\in\Theta} \frac{1}{n}\sum_{i=1}^{[n(1-2\a)]} r^2_{(i)}(\theta).
\]
While this procedure appears to be fairly similar to an ordinary least squares procedure with the respective computational advantages, one should recall that the trimming crucially depends on the choice of the parameter $\theta$. That means even if the model $m$ is linear in the parameter $\theta$, one generally yields a non-convex objective function with several local minima.
Interestingly, the trimming takes place only for residuals with large modulus. If the error distribution is symmetric, this procedure yields a consistent estimator for $\theta_0$ in an i.i.d.\ setting. If one wants to relax the assumption on the error distribution and is interested in modelling $\RVaR_{\a,\b}$ for general $0<\a<\b<1$ in \eqref{eq:model spec2}, one could come up with the following ad-hoc procedure: Consider the order-statistics of the \emph{residuals} $\eps_{(1)}(\theta) \le \cdots \le \eps_{(n)}(\theta)$. Then define an $M$-estimator via
\[
\widehat \theta_n =  \argmin_{\theta\in\Theta}\frac{1}{n} \sum_{i=[n\a]}^{[n\b]} |\eps_{(i)}(\theta)|^2\,.
\]
This procedure takes into account the asymmetric nature of trimming when dealing with $\b\neq 1-\a$ or $\b=1-\a$ and an asymmetric error distribution. 
However, as outlined above, this procedure can lead to problems in the presence of heteroscedasticity or general non-stationarity of the error distribution, if the conditional $\VaR_\a$ and $\VaR_\b$ of $Y_t$ given $X_t$ depends on $X_t$.
We would like to point out that, at the cost of additionally modelling the $\a$- and $\b$-quantile, the procedure using our strictly consistent scoring functions for the triplet $(\VaR_\a, \VaR_\b, \RVaR_{\a,\b})$ described in Subsection \ref{subsec:joint regression} does not rely on the usage of order-statistics and it can in general deal with heteroscedasticity. The only degree of `stationarity' is required through \eqref{eq:model spec}. Especially stationarity is deemed a too strong assumption in the context of financial data; see \cite{Davis2016}.

Finally, we would like to remark that there are further procedures belonging to the field of TLS. For instance, \cite{AtkinsonCheng1999} propose an adaptive procedure where the trimming parameter is data driven; see also \cite{CerioliETAL2018}. However, we see no apparent way how to use such procedures if one is interested in \emph{predefined} trimming parameters $\a$ and $\b$.

\subsection{Connections to Huber loss and Huber skipped mean}

In his seminal paper, \cite{Huber1964} introduced the famous \emph{Huber loss} $S(x,y) = \rho(x-y)$ where $\rho(t) = \frac12 t^2$ for $|t|\le k$ and $\rho(t) = k|t| - \frac12 k^2$ for $|t|>k$. Huber argues that the ``the corresponding [M-]estimator is related to Winsorizing'' \cite[p.\ 79]{Huber1964}. What obtained significantly less attention\,---\,maybe due to its lack of convexity\,---\,is another loss function he considers on the same page of the paper which is defined as $S(x,y) = \rho(x-y)$ for $\rho(t) = \frac12 t^2$ for $|t|\le k$ and $\rho(t) = \frac12 k^2$ for $|t|>k$. He writes about it: ``the corresponding [M-]estimator is a trimmed mean'' (\textit{ibidem}).

One could define an asymmetric version of the latter loss function by using $S_{k_1,k_2}(x,y) = \rho_{k_1,k_2}(x-y)$ with 
\[
\rho_{k_1,k_2}(t) = \begin{cases}
\frac12 k_1^2 & t<k_1\\
\frac12 t^2 & k_1\le t < k_2\\
\frac12 k_2^2 & t\ge k_2.
\end{cases}
\]
Assuming that $F$ is continuous with density $f$ for the sake of the simplicity of the argument,
the corresponding first-order condition for a minimum of the expected score $\bar S_{k_1,k_2}(x,F)$ is equivalent with
\begin{align*}
x = \frac{1}{F(k_2 - x) - F(k_1 - x)}\int_{k_1-x}^{k_2 - x} yf(y)\dint y.
\end{align*}
Now a suggestion similar to \citet[p.\ 876]{Rousseeuw1984} is to consider this loss with $k_1 = \VaR_\b(F)$ and $k_2 = \VaR_\a(F)$ stemming from some pre-estimate. However, one can see that the first order-condition is generally not solved by $\RVaR_{\a,\b}(F)$. Again, if one is interested in $M$-estimation for the trimmed mean or, more generally, RVaR, one should use the scoring functions introduced at \eqref{eq:S}.

\section*{Acknowledgements}
We would like to thank Timo Dimitriadis and Anthony C.\ Atkinson for insightful discussions about the topic, and Ruodu Wang, Rafael Frongillo, Tilmann Gneiting and Jana Hlavinov\'a for helpful suggestions which improved an earlier version of this paper. \\
Tobias Fissler is grateful to the Department of Mathematics at Imperial College London who funded his fellowship during which most of the work of this paper has been done. Johanna Ziegel is grateful for financial support from the Swiss National Science Foundation.

\appendix

\section*{Appendix}

We present a list of assumptions used in Section \ref{sec:results}. For more details about their interpretations and implications, please see \cite{FisslerZiegel2016} where they were originally introduced.

\begin{ass}[V]\label{ass:V1}
$\F$ is convex and for every $x\in\interior(\A)$ there are $F_1, \ldots, F_{k+1}\in\F$ such that 
\(
0\in \interior\left( \conv\left(\left\{ \bar V(x,F_1), \ldots, \bar V(x,F_{k+1})\right\}\right)\right)\,.
\)
\end{ass}
Note that if $V\colon\A\times \R\to\R^k$ is a strict $\F$-identification function for $T\colon\F\to\A$ which satisfies Assumption (V\ref{ass:V1}), then for each $x\in\interior(\A)$ there is an $F\in\F$ such that $T(F) = x$.
\setcounter{ass}{2}
\begin{ass}[V]\label{ass:V3}
The map $\bar V(\cdot, F)$ is continuously differentiable for every $F\in \F$.
\end{ass}

\begin{ass}[V]\label{ass:V4}
Let assumption (V\ref{ass:V3}) hold. For all $r\in\{1, \ldots, k\}$ and for all $t\in\interior(\A)\cap T(\F)$ there are $F_1, F_2 \in T^{-1}(\{t\})$ such that
\begin{align*}
\partial_l \bar V_l(t,F_1) = \partial_l \bar V_l(t,F_2)\quad \forall l\in\{1,\ldots, k\}\setminus \{r\}, && \partial_r \bar V_r(t,F_1) \neq \partial_r \bar V_r(t,F_2).
\end{align*}
\end{ass}

\setcounter{ass}{0}
\begin{ass}[F]\label{ass:F1}
For every $y\in\R$ there exists a sequence $(F_n)_{n \in \mathbb{N}}$ of distributions $F_n \in \F$ that converges weakly to the Dirac-measure $\delta_y$ such that the support of $F_n$ is contained in a compact set $K$ for all $n$. 
\end{ass}

\setcounter{ass}{0}
\begin{ass}[VS]\label{ass:VS1}
Suppose that the complement of the set
\[
C := \{(x,y) \in \A \times \R\;|\; \text{$V(x,\cdot)$ and $S(x,\cdot)$ are continuous at the point $y$}\}
\]
has $(k+d)$-dimensional Lebesgue measure zero.
\end{ass}

\setcounter{ass}{1}

\begin{ass}[S]\label{ass:S2}
For every $F\in \F$, the function $\bar S(\cdot, F)$ is continuously differentiable and the gradient is locally Lipschitz continuous. Furthermore, $\bar S(\cdot, F)$ is twice continuously differentiable at $t = T(F)\in \interior(\A)$. 
\end{ass}

\bibliographystyle{plainnat}

\end{document}